\newtheorem{thm}{Theorem}[section]
\newtheorem{lemma}[thm]{Lemma}
\newtheorem{prop}[thm]{Proposition}
\newtheorem{ex}[thm]{Example}
\newtheorem{corollary}[thm]{Corollary}
\newtheorem{notation}[thm]{Notation}
\theoremstyle{definition}
\newtheorem{definition}[thm]{Definition}
\newtheorem{obs}[thm]{Observation}
\newtheorem{prob}[thm]{Problem}
\newcommand{\blackged}{\hfill$\blacksquare$}
\newcommand{\whiteged}{\hfill$\square$}
\newcounter{proofcount}
\renewenvironment{proof}[1][\proofname.]{\par
  \ifnum \theproofcount>0 \pushQED{\whiteged} \else \pushQED{\blackged} \fi%
  \refstepcounter{proofcount}
  \normalfont %\topsep6\p@\@plus6\p@\relax
  \trivlist
  \item[\hskip\labelsep
        \itshape
    {\bf\em #1}]\ignorespaces
}{%
  \addtocounter{proofcount}{-1}
  \popQED\endtrivlist
}
\begin{document}

\begin{center}
\textbf{Free Globularily Generated Double Categories I}
\end{center}

\begin{center}
\textit{Juan Orendain}
\end{center}

\noindent \textit{Abstract:} This is the first part of a two paper series studying free globularily generated double categories. In this first installment we introduce the free globularily generated double category construction. The free globularily generated double category construction canonically associates to every bicategory together with a possible category of vertical morphisms, a double category fixing this set of initial data in a free and minimal way. We use the free globularily generated double category to study length, free products, and problems of internalization. We use the free globularily generated double category construction to provide formal functorial extensions of the Haagerup standard form construction and the Connes fusion operation to inclusions of factors of not-necessarily finite Jones index.

\tableofcontents

\section{Introduction}\label{s1}

\noindent  Double categories were introduced by Ehresmann in \cite{Ehr1}. Bicategories were later introduced by B\'enabou in \cite{BenabouBicats}. Both double categories and bicategories express the notion of a higher categorical structure of second order, each with its advantages and disadvantages. Double categories and bicategories relate in different ways.

Every double category admits an underlying bicategory, its horizontal bicategory. The horizontal bicategory $HC$ of a double category $C$ 'flattens' $C$ by discarding vertical morphisms and only considering globular squares. There are several structures transferring vertical information on a double category to its horizontal bicategory, e.g. connection pairs \cite{BrownMosa}, thin structures \cite{BrownSpencer}, and foldings and cofoldings \cite{BrownSpencer2} among others. A great deal of information about a double category can be reduced to information about its horizontal bicategory under the assumption of the existence of such structures, see \cite{FioreGambinoKock} for example. 

Bicategories on the other hand 'lift' to double categories through several different constructions, examples of which are the Ehresmann double category of quintets construction \cite{Ehr3}, the double category of adjoints construction \cite{Palmquist}, the double category of spans construction \cite{PronkDawsonPare}, the construction of framed bicategories through monoidal fibrations of \cite{SchulmanFramed} and the construction of the double category of semisimple von Neumann algebras and finite morphisms of \cite{Bartels1,Bartels2}. These construction follows different methods and the resulting double categories express different aspects of the bicategories they lift. In all cases one starts with a bicategory as initial set of data together with a choice of vertical morphisms, which serve as a 'direction' towards which one lifts. In all cases one ends up with a double category having relevant information about the initial bicategory and the collection of vertical morphisms, and relating to this initial set of data through horizontalization.

We are interested general constructions of the type described above. This is the first part of a two paper series studying the free globularily generated double category construction. The free globularily generated double category construction canonically associates to every bicategory, together with a direction towards which to lift, i.e. together with a category of vertical morphisms, a double category. This double category fixes the initial set of data and is minimal with respect to this property. In this paper we provide a detailed construction of the free globularily generated double category associated to a decorated bicategory and we apply this construction to problems of existence of internalizations and to the concept of length of a double category. We now present a more detailed account of the contents of this paper.

\

\noindent \textit{The problem of existence of internalizations} 

\

\noindent We are interested in the following situation. Given a bicategory $\mathcal{B}$ we will say that a category $\mathcal{B}^*$ is a decoration of $\mathcal{B}$ if the collection of 0-cells of $\mathcal{B}$ is equal to the collection of objects of $\mathcal{B}^*$. In this case we say that the pair $(\mathcal{B}^*,\mathcal{B})$ is a decorated bicategory. We think of decorated bicategories as bicategories together with an orthogonal direction, provided by the corresponding decoration, towards which to lift $\mathcal{B}$ to a double category. Given a double category $C$ the pair $(C_0,HC)$ where $C_0$ is the category of objects of $C$, is a decorated bicategory. We write $H^*C$ for this decorated bicategory. We call $H^*C$ the decorated horizontalization of $C$. We consider the following problem.

\begin{prob}\label{prob}
Let $(\mathcal{B}^*,\mathcal{B})$ be a decorated bicategory. Find double categories $C$ such that $H^*C=\mathcal{B}$.
\end{prob}

\noindent Given a decorated bicategory $(\mathcal{B}^*,\mathcal{B})$ we say that a solution $C$ to problem \ref{prob} for $(\mathcal{B}^*,\mathcal{B})$ is an internalization of $(\mathcal{B}^*,\mathcal{B})$. We are thus interested in finding internalizations to decorated bicategories. Pictorially, we are intereted in the following situation: Given a set of 2-dimensional cells of the form

\begin{center}

\begin{tikzpicture}
\matrix(m)[matrix of math nodes, row sep=4em, column sep=4em,text height=1.5ex, text depth=0.25ex]
{\bullet&\bullet\\};
\path[->,font=\scriptsize]
(m-1-1) edge [bend left=45] node [above]{$\alpha$}(m-1-2)
        edge [bend right=45] node [below]{$\beta$}(m-1-2)
        edge [white]node[black][fill=white]{$\varphi$}(m-1-2) ;
\end{tikzpicture}
\end{center}

\noindent forming a bicategory, and given a collection of vertical arrows of the form:

\begin{center}

\begin{tikzpicture}
\matrix(m)[matrix of math nodes, row sep=4em, column sep=4em,text height=1.5ex, text depth=0.25ex]
{\bullet\\
\bullet\\};
\path[->,font=\scriptsize]
(m-1-1) edge node [left]{$f,g \ \mbox{etc.}$}(m-2-1);
\end{tikzpicture}
\end{center}

\noindent forming a category, such that the endpoints of these arrows are the same as the verteces of the above globular diagrams, we consider boundaries of hollow squares of the form:

\begin{center}

\begin{tikzpicture}
\matrix(m)[matrix of math nodes, row sep=4em, column sep=4em,text height=1.5ex, text depth=0.25ex]
{\bullet&\bullet\\
\bullet&\bullet\\};
\path[->,font=\scriptsize]
(m-1-1) edge node [above]{$\alpha$} (m-1-2)
        edge node [left]{$f$} (m-2-1)
(m-2-1) edge node[below]{$\beta$} (m-2-2)			
(m-1-2) edge node[right]{$g$} (m-2-2);
\end{tikzpicture}

\end{center}

\noindent formed by horizontal edges of globular diagrams and the set of decoration arrows provided as our set of initial conditions. Identifying globular diagrams as above with squares of the form:

\begin{center}

\begin{tikzpicture}
\matrix(m)[matrix of math nodes, row sep=4em, column sep=4em,text height=1.5ex, text depth=0.25ex]
{\bullet&\bullet\\
\bullet&\bullet\\};
\path[->,font=\scriptsize]
(m-1-1) edge node [above]{$\alpha$} (m-1-2)
        edge[blue] node [black][left]{$id$} (m-2-1)
(m-2-1) edge node[below]{$\beta$} (m-2-2)			
(m-1-2) edge [blue]node[right][black]{$id$} (m-2-2)
(m-1-1) edge [white]node[black][fill=white]{$\varphi$}(m-2-2);
\end{tikzpicture}

\end{center}

\noindent and formally associating to every vertical arrow as above a unique identity asquare as:

\begin{center}

\begin{tikzpicture}
\matrix(m)[matrix of math nodes, row sep=4em, column sep=4em,text height=1.5ex, text depth=0.25ex]
{\bullet&\bullet\\
\bullet&\bullet\\};
\path[->,font=\scriptsize]
(m-1-1) edge [red]node [black][above]{$id$} (m-1-2)
        edge node [left]{$f$} (m-2-1)
(m-2-1) edge [red]node[black][below]{$id$} (m-2-2)			
(m-1-2) edge node[right]{$f$} (m-2-2)
(m-1-1) edge [white]node[black][fill=white]{$i_f$}(m-2-2);
\end{tikzpicture}

\end{center}

\noindent problem \ref{prob} asks about coherent ways to fill boundaries of hollow squares as above with globular and horizontal squares associated to our set of initial data in such a way that vertical arrows and globular squares are fixed and such that the resulting structure forms a double category. We regard such problems as formal versions of arguments of 'filling squares' classically considered in nonabelian algebraic topology, see \cite{BrownBook}. One of the author's motivations for studying such problems is the problem of existence of a compatible pair of tensor functors associating to every von Neumann algebra $A$ its Haagerup standard form and associating to every horizontally compatible pair of Hilbert bimodules $_AH_B,_BK_C$ the corresponding fusion Hilbert bimodule $_AH\boxtimes_BK_C$ respectively. Such compatible pair of functors should provide the pair formed by the category of von Neumann algebras and their morphisms and the category of Hilbert bimodules and equivariant intertwining operators with the structure of a category internal to tensor categories. The methods developed in the present series of papers provide partial solutions to this problem.

\

\noindent \textit{A case for globularily generated double categories}

\

\noindent The situation described above motivated the author to introduce the concept of globularily generated double category in \cite{yo1}. We say that a double category is globularily generated if it is generated by its collection of globular squares. Pictorially, a double category $C$ is globularily generated if every square in $C$ can be written as horizontal and vertical compositions of squares of the form:

\begin{center}

\begin{tikzpicture}
\matrix(m)[matrix of math nodes, row sep=4em, column sep=4em,text height=1.5ex, text depth=0.25ex]
{\bullet&\bullet&\bullet&\bullet\\
\bullet&\bullet&\bullet&\bullet\\};
\path[->,font=\scriptsize]
(m-1-1) edge node [above]{$\alpha$} (m-1-2)
        edge[blue] node [black][left]{$id$} (m-2-1)
(m-2-1) edge node [below]{$\beta$} (m-2-2)			
(m-1-2) edge [blue] node[black][right] {$id$} (m-2-2)
(m-1-1) edge [white] node[black][fill=white]{$\varphi$} (m-2-2)

(m-1-3) edge[red] node [black][above]{$id$} (m-1-4)
        edge node [left]{$f$} (m-2-3)
(m-2-3) edge [red]node[black][below]{$id$} (m-2-4)			
(m-1-4) edge node [right]{$f$} (m-2-4)
(m-1-3) edge [white] node[black][fill=white]{$i_f$} (m-2-4);
\end{tikzpicture}

\end{center}

\noindent Given a double category $C$ we write $\gamma C$ for the sub-double category of $C$ generated by squares as above. $\gamma C$ is globularily generated, it satisfies the equation $H^*C=H^*\gamma C$ and it is the minimal sub-double category of $C$ satisfying this equation. We call $\gamma C$ the globularily generated piece of $C$. A double category $C$ is globularily generated if and only if there are no proper sub-double categories $D$ of $C$ such that $H^*C=H^*D$. Globularily generated double categories are thus precisely the minimal solutions to problem \ref{prob}. This can be expressed categorically as follows: Write \textbf{dCat}, \textbf{gCat} and \textbf{bCat}$^*$ for the category of double categories and double functors, for the subcategory of \textbf{dCat} generated by globularily generated double categories, and for the category of decorated bicategories and decorated pseudofunctors respectively. The globularily generated piece construction extends to a reflector (2-reflector in fact) $\gamma$ of \textbf{gCat} in \textbf{dCat}. It is not difficult to see that this implies that $\gamma$ is in fact a Grothendieck fibration. Moreover, the decorated horizontal bicategory construction extends to a functor $H^*$ from \textbf{dCat} to \textbf{bCat}$^*$, which by the comments above is easily seen to be constant on the fibers of $\gamma$. We obtain a commutative triangle:

\begin{center}

\begin{tikzpicture}
\matrix(m)[matrix of math nodes, row sep=4em, column sep=3em,text height=1.5ex, text depth=0.25ex]
{\mbox{\textbf{dCat}}&&\mbox{\textbf{bCat}}^*\\
&\mbox{\textbf{gCat}}&\\};
\path[->,font=\scriptsize]
(m-1-1) edge node [above]{$H^*$} (m-1-3)
        edge node [left]{$\gamma$} (m-2-2)
(m-2-2) edge node[right]{$H^*\restriction_{\mbox{\textbf{gCat}}}$} (m-1-3)			;
\end{tikzpicture}

\end{center}

\noindent We thus think of double categories as being parametrized, or bundled, by globularily generated double categories. The relevant information about problem \ref{prob} is contained in the bases of this fibration. We summarize this by saying that finding solutions to problem \ref{prob} is equivalent to finding globularily generated solutions. We believe this justifies the study of globularily generated double categories.

Globularily generated double categories admit intrinsical structure that makes them, to some extent, easy to describe. The category of squares $C_1$ of a globularily generated double category $C$ admits an expression as a limit $\varinjlim V^k_C$ of a chain of categories $V^1_C\subseteq V^2_C\subseteq \dots$ defined inductively by setting $V^1_C$ as the subcategory of $C_1$ generated by squares as above, and by setting $V^k_C$ as the subcategory of $C$ generated by horizontal compositions of morphisms in $V^{k-1}_C$ for every $k>1$. We call this chain of categories the vertical filtration of $C$. The vertical filtration allows us to define numerical invariants for double categories. We say that a square $\varphi$ in a globularily generated double category $C$ is of length $k$, $\ell\varphi$ in symbols, if $\varphi$ is a morphism in $V^k_C$ but not a morphism in $V^{k-1}_C$. We define the length of a double category $C$, which we write $\ell C$, as the supremum of lengths of squares in $\gamma C$. The only examples of globularily generated double categories studied so far, i.e. trivial double categories and globularily generated pieces of double categories of bordisms, algebras, and von Neumann algebras, are all of length 1.

\

\noindent \textit{Free globularily generated double categories}

\

\noindent We will, from now on, identify every decorated bicategory $(\mathcal{B}^*,\mathcal{B})$ with the bicategory $\mathcal{B}$. The free globularily generated double category construction associates to every decorated bicategory $\mathcal{B}$ a globularily generated double category $Q_\mathcal{B}$ in such a way that $Q_\mathcal{B}$ fixes the data of $\mathcal{B}$ and such that the only relations satisfied by the squares of $Q_\mathcal{B}$ are those relations coming from relations satisfied by the 2-cells of $\mathcal{B}$ and the morphisms of $\mathcal{B}^*$. 

The intuitive idea behind the free globularily generated double category construction is as follows: Suppose we are provided with a decorated bicategory $\mathcal{B}$. We wish to construct, from the data of $\mathcal{B}$ alone, a double category $C$ satisfying the equation $H^*C=\mathcal{B}$, and we wish to do this in a minimal way. As outlined above we thus wish to construct a globularily generated double category $C$ satisfying the equation $H^*C=\mathcal{B}$. Such double category has $\mathcal{B}^*$ as category of objects, has the collection of 1-cells $\mathcal{B}_1$ of $\mathcal{B}$ as collection of horizontal morphisms, and all its squares can be expressed as a finite sequence of vertical and horizontal compositions of squares of the form:

\begin{center}

\begin{tikzpicture}
\matrix(m)[matrix of math nodes, row sep=4em, column sep=4em,text height=1.5ex, text depth=0.25ex]
{\bullet&\bullet&\bullet&\bullet\\
\bullet&\bullet&\bullet&\bullet\\};
\path[->,font=\scriptsize]
(m-1-1) edge node [above]{$\alpha$} (m-1-2)
        edge[blue] node [black][left]{$id$} (m-2-1)
(m-2-1) edge node [below]{$\beta$} (m-2-2)			
(m-1-2) edge [blue] node[black][right] {$id$} (m-2-2)
(m-1-1) edge [white] node[black][fill=white]{$\varphi$} (m-2-2)

(m-1-3) edge[red] node [black][above]{$id$} (m-1-4)
        edge node [left]{$f$} (m-2-3)
(m-2-3) edge [red]node[black][below]{$id$} (m-2-4)			
(m-1-4) edge node [right]{$f$} (m-2-4)
(m-1-3) edge [white] node[black][fill=white]{$i_f$} (m-2-4);
\end{tikzpicture}

\end{center}

\noindent with $\varphi$ being a 2-cell in $\mathcal{B}$ and $f$ being a morphism in $\mathcal{B}^*$. Moreover, the vertical filtration of $C$ provides a way to organize these expressions into strata indicating some measure of complexity.

The free globularily generated double category construction formally reproduces the above situation. We begin by formally associating to every vertical morphism $f:a\to b$ in the decoration $\mathcal{B}^*$ of the decorated bicategory we are provided with, a square of the form:

\begin{center}

\begin{tikzpicture}
\matrix(m)[matrix of math nodes, row sep=4em, column sep=4em,text height=1.5ex, text depth=0.25ex]
{a&a\\
b&b\\};
\path[->,font=\scriptsize]

(m-1-1) edge[red] node [black][above]{$id$} (m-1-2)
        edge node [left]{$f$} (m-2-1)
(m-2-1) edge [red]node[black][below]{$id$} (m-2-2)			
(m-1-2) edge node [right]{$f$} (m-2-2)
(m-1-1) edge [white] node[black][fill=white]{$i_f$} (m-2-2);
\end{tikzpicture}

\end{center}

\noindent and we formally associate to every 2-cell in $\mathcal{B}$ of the form:

\begin{center}

\begin{tikzpicture}
\matrix(m)[matrix of math nodes, row sep=4em, column sep=4em,text height=1.5ex, text depth=0.25ex]
{a&b\\};
\path[->,font=\scriptsize]
(m-1-1) edge [bend left=45] node [above]{$\alpha$}(m-1-2)
        edge [bend right=45] node [below]{$\beta$}(m-1-2)
        edge [white]node[black][fill=white]{$\varphi$}(m-1-2) ;
\end{tikzpicture}
\end{center}

\noindent a square of the form:

\begin{center}

\begin{tikzpicture}
\matrix(m)[matrix of math nodes, row sep=4em, column sep=4em,text height=1.5ex, text depth=0.25ex]
{a&b\\
 a&b\\};
\path[->,font=\scriptsize]

(m-1-1) edge node [above]{$\alpha$} (m-1-2)
        edge [blue]node [black][left]{$id$} (m-2-1)
(m-2-1) edge node [below]{$\beta$} (m-2-2)			
(m-1-2) edge [blue]node [black][right]{$id$} (m-2-2)
(m-1-1) edge [white] node[black][fill=white]{$\varphi$} (m-2-2);
\end{tikzpicture}

\end{center}

\noindent Having done this, we consider the path category generated by these squares. These are squares of length 1. We will write $F_1$ for this category. Inductively we define a category $F_k$ as the path category of the collection of formal horizontal compositions of squares in $F_{k-1}$, assuming $F_{k-1}$ has been defined. This provides a collection of squares with free horizontal and vertical composition rules. Dividing by a suitable relation we would obtain a free double category in the sense of \cite{DawsonPareFree}. Since we wish to fix the data provided by $\mathcal{B}$ we divide the structure we obtain by a finer equivalence relation $R_\infty$ and thus obtain a double category $Q_\mathcal{B}$ such that $Q_\mathcal{B}$ is globularily generated. If we choose $R_\infty$ carefully enough, the category of objects of $Q_\mathcal{B}$ will be $\mathcal{B}^*$ and the collection of horizontal morphisms of $Q_\mathcal{B}$ will be the collection of 1-cells of $\mathcal{B}$. 

We prove that thus defined the globularily generated double category $Q_\mathcal{B}$ associated to a decorated bicategory $\mathcal{B}$ not-necessarily provides a solution to problem \ref{prob} for $\mathcal{B}$. The only obstruction for this is that through composition operations in $Q_\mathcal{B}$ we may inadvertedly construct new globular squares not already in $\mathcal{B}$. We provide conditions on decorated bicategories $\mathcal{B}$ that guarantee that $Q_\mathcal{B}$ provides solutions to problem \ref{prob} for $\mathcal{B}$. Moreover, in the case in which $Q_\mathcal{B}$ does not provide solutions to problem \ref{prob} for $\mathcal{B}$ we prove that if we modify $\mathcal{B}$ enough, we can construct a decorated bicategory for which the free globularily generated double category does provide solutions to problem \ref{prob}. 

The free globularily generated double category construction provides a method for explicitly constructing double categories satisfying certain conditions. We use the free globularily generated double category construction to provide examples of double categories with non-trivial length and examples of double categories with infinite length. We relate the free globularily generated double category construction with the free product of groups and monoids, and with the free double category construction of \cite{DawsonPareFree}. Further, we apply free globularily generated double categories to provide formal solutions to the problem of existence of functorial extensions of both the Haagerup standard form and the Connes fusion operation.

Finally, in the second part of the present series of papers \cite{yo2} we provide an interpretation of the globularily generated double category construction as the object function of a functor $Q$ from \textbf{bCat}$^*$ to \textbf{gCat} satisfying the equation:

\[Q\dashv H^*\restriction_{\mbox{\textbf{gCat}}}\]

\noindent thus completing the diagram expressing the fact that $H^*$ factors through fibers of \textbf{dCat} modulo the globularily generated piece fibration $\gamma$ presented above. The restriction $H^*\restriction_{\mbox{\textbf{gCat}}}$ is faithful and thus the above result provides in particular the globularily generated double category construction with the structure of a free object in \textbf{gCat}. We will thus interpret free globularily generated double categores as sets of generators for bases mod $\gamma$ of solutions to problem \ref{prob}.

\

\noindent \textit{Conventions}

\

\noindent We follow the usual conventions for the theory of bicategories and double categories, with a few exceptions. The word double category will always mean pseudo double category. We will write $\mathcal{B}_0,\mathcal{B}_1$, and $\mathcal{B}_2$ for the collections of 0-, 1-, and 2-cells of a bicategory $\mathcal{B}$ and we will write $C_0,C_1$ for the category of objects and the category of squares of a double cateogory $C$. We will write horizontal identities and compositions as $i$ and $\ast$. We will write vertical compositions as word concatenation. We will write $\lambda,\rho$ and $A$ for left and right identity transformations and associators of both bicategories and double categories. As above, we will identify decorated bicategories with their underlying bicategories. Thus when we say that $\mathcal{B}$ is a decorated bicategory the letter $\mathcal{B}$ will denote both a decorated bicategory and its underlying bicategory. We will write $\mathcal{B}^*$ for the decoration of a decorated bicategory $\mathcal{B}$. For most of the paper we will interpret decorated bicategories as decorated horizontalizations of double categories, we will thus sometimes call the 0-. 1-, and 2-cells of the underlying bicategory of a decorated bicategory $\mathcal{B}$ the objects, the horizontal morphisms and the globular squares of $\mathcal{B}$ and we will call the morphisms of the decoration $\mathcal{B}^*$ of $\mathcal{B}$ the vertical morphisms of $\mathcal{B}$. Pictorially we will represent vertical identity endomorphisms by blue arrows and horizontal identity endomorphisms by red arrows as was done above. Squares of the form:

\begin{center}

\begin{tikzpicture}
\matrix(m)[matrix of math nodes, row sep=4em, column sep=4em,text height=1.5ex, text depth=0.25ex]
{\bullet&\bullet\\
\bullet&\bullet\\};
\path[->,font=\scriptsize]
(m-1-1) edge node [above]{$\alpha$} (m-1-2)
        edge[blue] node {} (m-2-1)
(m-2-1) edge node[below]{$\beta$} (m-2-2)			
(m-1-2) edge [blue]node{} (m-2-2)
(m-1-1) edge [white]node[black][fill=white]{$\varphi$}(m-2-2);
\end{tikzpicture}

\end{center}

\noindent will thus represent globular squares, squares of the form:

\begin{center}

\begin{tikzpicture}
\matrix(m)[matrix of math nodes, row sep=4em, column sep=4em,text height=1.5ex, text depth=0.25ex]
{\bullet&\bullet\\
\bullet&\bullet\\};
\path[->,font=\scriptsize]
(m-1-1) edge [red]node{} (m-1-2)
        edge node [left]{$f$} (m-2-1)
(m-2-1) edge [red]node{} (m-2-2)			
(m-1-2) edge node[right]{$f$} (m-2-2)
(m-1-1) edge [white]node[black][fill=white]{$i_f$}(m-2-2);
\end{tikzpicture}

\end{center}

\noindent will represent horizontal identities and squares of the form:

\begin{center}

\begin{tikzpicture}
\matrix(m)[matrix of math nodes, row sep=4em, column sep=4em,text height=1.5ex, text depth=0.25ex]
{a&a&b&b&c&c\\
a&a&b&b&c&c\\};
\path[->,font=\scriptsize]
(m-1-1) edge [red] node [above]{} (m-1-2)
        edge[blue] node [black][left]{$id$} (m-2-1)
(m-2-1) edge node [below]{$\beta$} (m-2-2)			
(m-1-2) edge [blue] node[black][right] {$id$} (m-2-2)
(m-1-1) edge [white] node[black][fill=white]{$\varphi$} (m-2-2)

(m-1-3) edge node [above]{$\alpha$} (m-1-4)
        edge[blue] node [left]{} (m-2-3)
(m-2-3) edge [red]node{} (m-2-4)			
(m-1-4) edge [blue]node [right]{} (m-2-4)
(m-1-3) edge [white] node[black][fill=white]{$\psi$} (m-2-4)

(m-1-5) edge [red]node {} (m-1-6)
        edge[blue] node [left]{} (m-2-5)
(m-2-5) edge [red]node{} (m-2-6)			
(m-1-6) edge [blue]node [right]{} (m-2-6)
(m-1-5) edge [white] node[black][fill=white]{$\eta$} (m-2-6);
\end{tikzpicture}

\end{center}

\noindent will represent a globular square from the horizontal identity $i_a$ of the object $a$ to the horizontal morphism $\beta$, a globular square from the horizontal morphism $\alpha$ to $i_b$ and a globular endomorphism of $i_c$ respectively.

\

\noindent \textit{Contents of the paper}

\

\noindent We now sketch the contents of the paper. In section \ref{s2} we present a detailed construction of the free globularily generated double category associated to a decorated bicategory. We do this in several steps and our construction not only yields a free globularily generated double category but a free vertical filtration which will allow us to associate numerical invariants to decorated bicategories. In section \ref{s3} we study relations between the free globularily generated double category construction and problem \ref{prob}. We provide conditions on decorated bicategories that ensure that the corresponding free globularily generated double category is an internalization and in situations in which this is not the case we introduce a method under which one can always extend a decorated bicategory to a decorated bicategory for which the free globularily generated double category is an internalizations. In section \ref{s4} we apply the free globularily generated double category construction to provide examples of double cateories with non-trivial length. In section \ref{s5} we study the free globularily generated double category in the case of deloopings of monoidal categories decorated by deloopings of groups. We prove that the free globularily generated double category associated to decorated bicategories of this type are always of length 1. Finally, in section \ref{s6} we apply a modification of the free globularily generated double category to provide compatible formal functorial extensions of the Haagerup standard form construction and the Connes fusion operation to certain linear categories of Hilbert spaces.

\section{The free globularily generated double category}\label{s2}

\noindent In this section we introduce the free globularily generated double category construction. The free globularily generated double category construction canonically associates a globularily generated double category to every decorated bicategory. The strategy behind the construction is to emulate the internal structure defined by the vertical and horizontal filtrations in abstract globularily generated double categories in order to obtain, from the data of a decorated bicategory alone, a globularily generated double category. The construction of the free globularily generated double category is rather involved and we divide it in several steps. We begin with a few preliminary definitions and results.

\

\noindent \textit{Preliminaries: Evaluations}

\

\noindent Let $X$ and $Y$ be sets. Let $s,t:X\to Y$ be functions. Let $x_1,...,x_n$ be a sequence in $X$. We will say that $x_1,...,x_k$ is compatible with respect to $s$ and $t$ if the equation $tx_{i+1}=sx_{i}$ holds for every $1\leq i\leq k-1$. Equivalently, $x_1,...,x_k$ is compatible with respect to $s$ and $t$ if $x_1,...,x_n$ is a composable sequence of morphisms in the free category generated by $X$ with $s$ and $t$ as domain and codomain functions respectively. Given a compatible sequence $x_1,...,x_k$ in $X$, we call any way of writing the word $x_k...x_1$ following an admissible parenthesis pattern, an evaluation of $x_1,...,x_k$. Equivalently, the evaluations of a compatible sequence $x_1,...,x_k$ are different ways of writing the word $x_k...x_1$ composing elements of $x_1,...,x_k$ two by two in the free category generated by $X$, with $s,t$ as domain and codomain functions. For example, $(yx)$ is the only evaluation of the two term compatible sequence $x,y$ and $(x(yz)),((xy)z)$ are the two evaluations of the compatible three term sequence $x,y,z$. We will write $X_{s,t}$ for the set of evaluations of finite sequences of elements of $X$, compatible with respect to $s$ and $t$.

Given functions $s,t:X\to Y$, we write $\tilde{s}$ and $\tilde{t}$ for the functions $\tilde{s},\tilde{t}:X_{s,t}\to Y$ defined as follows: Given an evaluation $\Phi$ of a compatible sequence $x_1,...,x_k$ in $X$ we make $\tilde{s}\Phi$ and $\tilde{t}\Phi$ to be equal to $sx_1$ and $tx_k$ respectively. Observe that the values $\tilde{s}\Phi$ and $\tilde{t}\Phi$ do not depend on the particular evaluation $\Phi$ of $x_1,...,x_k$. Given a pair of compatible sequences $x_1,...,x_k$ and $x_{k+1},...,x_n$ in $X$, such that the 2 term sequence $x_k,x_{k+1}$ is compatible, and given evaluations $\Phi$ and $\Psi$ of $x_1,...,x_k$ and $x_{k+1},...,x_n$, the equation $\tilde{t}\Phi=\tilde{s}\Psi$ is satisfied and the concatenation of $\Psi$ and $\Phi$ defines an evaluation of the sequence $x_1,...,x_k,x_{k+1},...,x_n$. We denote the concatenation of $\Phi$ and $\Psi$ satisfying the conditions above by $\Psi\ast_{s,t}\Phi$. This operation defines a function from $X_{s,t}\times_YX_{s,t}$ to $ X_{s,t}$ where the fibration in $X_{s,t}\times_YX_{s,t}$ is taken with respect to the pair $\tilde{s},\tilde{t}$. We write $\ast_{s,t}$ for this function.

Now, given sets $X,X',Y$, and $Y'$, and functions $s,t:X\to Y$ and $s',t':X'\to Y'$, we say that a pair of functions $\varphi:X\to X'$ and $\phi:Y\to Y'$ is compatible if the following two squares commute 

\begin{center}

\begin{tikzpicture}
\matrix(m)[matrix of math nodes, row sep=4em, column sep=4em,text height=1.5ex, text depth=0.25ex]
{X&X'&X&X'\\
Y&Y'&Y&Y'\\};
\path[->,font=\scriptsize]
(m-1-1) edge node[auto] {$\varphi$} (m-1-2)
        edge node[left] {$s$} (m-2-1)
(m-2-1) edge node[auto]	{$\phi$} (m-2-2)			
(m-1-2) edge node[auto] {$t$} (m-2-2)
(m-1-3) edge node[auto] {$\varphi$} (m-1-4)
        edge node[left] {$s$} (m-2-3)
(m-1-4) edge node[auto]	{$t$} (m-2-4)			
(m-2-3) edge node[auto] {$\phi$} (m-2-4);
\end{tikzpicture}

\end{center}

\noindent Given sets $X,X',Y$, and $Y'$, functions $s,t:X\to Y$ and $s',t':X'\to Y'$, and a compatible pair of functions $\varphi:X\to X'$ and $\phi:Y\to Y'$, if $x_1,...,x_k$ is a sequence in $X$, compatible with respect to $s,t$ then the sequence $\varphi x_1,...,\varphi x_k$ is compatible with respect to $s',t'$. Moreover, given an evaluation $\Phi$ of a compatible sequence $x_1,...,x_k$, the same parenthesis pattern defining the evaluation $\Phi$ defines an evaluation of the compatible sequence $\varphi x_1,...,\varphi x_k$. We write $\mu_{\varphi,\phi}\Phi$ for this evaluation. We write $\mu_{\varphi,\phi}$ for the function from $X_{s,t}$ to $X'_{s',t'}$ associating the evaluation $\mu_{\varphi,\phi}\Phi$ to every evaluation $\Phi$ in $X_{s,t}$. The proof of the following lemma is straightforward.

\begin{lemma}\label{Evaluations}
Let $X,X',Y$, and $Y'$ be sets. Let $s,t:X\to Y$ and $s',t':X'\to Y'$ be functions. Let $\varphi:X\to X'$ and $\phi:Y\to Y'$ be functions such that the pair $\varphi,\phi$ is compatible. In that case the function $\mu_{\varphi,\phi}$ associated to the pair $\varphi,\phi$ satisfies the following conditions

\begin{enumerate}
\item The following two squares commute

\begin{center}

\begin{tikzpicture}
\matrix(m)[matrix of math nodes, row sep=4em, column sep=4em,text height=1.5ex, text depth=0.25ex]
{X_{s,t}&X'_{s',t'}&X_{s,t}&X'_{s',t'}\\
Y&Y'&Y&Y'\\};
\path[->,font=\scriptsize]
(m-1-1) edge node[auto] {$\mu_{\varphi,\phi}$} (m-1-2)
        edge node[left] {$\tilde{s}$} (m-2-1)
(m-2-1) edge node[auto]	{$\phi$} (m-2-2)			
(m-1-2) edge node[auto] {$\tilde{t}$} (m-2-2)
(m-1-3) edge node[auto] {$\mu_{\varphi,\phi}$} (m-1-4)
        edge node[left] {$\tilde{s}$} (m-2-3)
(m-1-4) edge node[auto]	{$\tilde{t}$} (m-2-4)			
(m-2-3) edge node[auto] {$\phi$} (m-2-4);
\end{tikzpicture}

\end{center}

\item The following square commutes

\begin{center}

\begin{tikzpicture}
\matrix(m)[matrix of math nodes, row sep=4em, column sep=4em,text height=1.5ex, text depth=0.25ex]
{X_{s,t}\times_YX_{s,t}&X'_{s',t'}\times_Y X'_{s',t'}\\
X_{s,t}&X'_{s',t'}\\};
\path[->,font=\scriptsize]
(m-1-1) edge node[auto] {$\mu_{\varphi,\phi}\times_\phi\mu_{\varphi,\phi}$} (m-1-2)
        edge node[left] {$\ast_{s,t}$} (m-2-1)
(m-2-1) edge node[auto]	{$\mu_{\varphi,\phi}$} (m-2-2)			
(m-1-2) edge node[auto] {$\ast_{s',t'}$} (m-2-2);
\end{tikzpicture}

\end{center}

\end{enumerate}
\end{lemma}

\noindent \textit{Preliminaries: Notational conventions}

\

\noindent Let $\mathcal{B}$ be a decorated bicategory. Let $\alpha$ be a vertical morphism in $\mathcal{B}$. We will write $i_\alpha$ for the singleton $\left\{\alpha\right\}$. We call $i_\alpha$ the formal horizontal identity of $\alpha$. We write $\mathbb{G}$ for the union of the collection of globular squares in $\mathcal{B}$ and the collection of formal horizontal identities of vertical morphisms of $\mathcal{B}$. We will adopt the following notational conventions for the elements of $\mathbb{G}$.

\begin{enumerate}

\item Let $\Phi$ be a globular square in $\mathcal{B}$. We write $d_0\Phi,c_0\Phi$ for the domain and codomain of $\Phi$ in $\mathcal{B}$. Let $\alpha$ be a vertical morphism in $\mathcal{B}$. Let $a$ and $b$ be the domain and the codomain, in $\mathcal{B}^*$ of $\alpha$. We write $d_0i_\alpha$ and $c_0i_\alpha$ for the horizontal identities $id_a$ and $id_b$ of $a$ and $b$ in $\mathcal{B}$. We write $d_0$ and $c_0$ for the functions from $\mathbb{G}$ to $\mathcal{B}_1$ associating $d_0\Phi$ and $c_0\Phi$ to every element $\Phi$ of $\mathbb{G}$.

\item Let $\Phi$ be a globular square in $\mathcal{B}$. We write $s_0\Phi$ and $t_0\Phi$ for the source and target of $\Phi$ in $\mathcal{B}$. Let $\alpha$ be a vertical morphism in $\mathcal{B}$. In that case we write $s_0i_\alpha$ and $t_0i_\alpha$ for the morphism $\alpha$. We write $s_0$ and $t_0$ for the functions from $\mathbb{G}$ to the collection of vertical morphisms of $\mathcal{B}$ associating $s_0\Phi$ and $t_0\Phi$ to every element $\Phi$ of $\mathbb{G}$.
\end{enumerate}

\noindent The functions $d_0,c_0,s_0$, and $t_0$ defined above are easily seen to be related by the following  conditions

\begin{enumerate}
\item The following two triangles commute

\begin{center}

\begin{tikzpicture}
\matrix(m)[matrix of math nodes, row sep=4em, column sep=4em,text height=1.5ex, text depth=0.25ex]
{\mathbb{G}&\mathcal{B}_1&\mathbb{G}&\mathcal{B}_1\\
&\mathcal{B}_0&&\mathcal{B}_0\\};
\path[->,font=\scriptsize]
(m-1-1) edge node[auto] {$d_0$} (m-1-2)
        edge node[left] {$s_0$} (m-2-2)			
(m-1-2) edge node[auto] {$d$} (m-2-2)
(m-1-3) edge node[auto] {$d_0$} (m-1-4)
(m-1-4) edge node[auto]	{$t$} (m-2-4)			
(m-1-3) edge node[left] {$t_0$} (m-2-4);
\end{tikzpicture}

\end{center}

\item The following two triangles commute

\begin{center}

\begin{tikzpicture}
\matrix(m)[matrix of math nodes, row sep=4em, column sep=4em,text height=1.5ex, text depth=0.25ex]
{\mathbb{G}&\mathcal{B}_1&\mathbb{G}&\mathcal{B}_1\\
&\mathcal{B}_0&&\mathcal{B}_0\\};
\path[->,font=\scriptsize]
(m-1-1) edge node[auto] {$t_0$} (m-1-2)
        edge node[left] {$s_0$} (m-2-2)			
(m-1-2) edge node[auto] {$d$} (m-2-2)
(m-1-3) edge node[auto] {$t_0$} (m-1-4)
(m-1-4) edge node[auto]	{$t$} (m-2-4)			
(m-1-3) edge node[left] {$t_0$} (m-2-4);
\end{tikzpicture}

\end{center}
\end{enumerate}

\noindent Given a decorated bicategory $\mathcal{B}$, we denote by $p$ the function from the collection of evaluations $\mathcal{B}_{1_{d,c}}$ of $\mathcal{B}_1$, with respect to the pair formed by the domain and the codomain functions $d,c$ in the bicategory underlying  $\mathcal{B}$, to the set of horizontal morphisms $\mathcal{B}_1$ of $\mathcal{B}$ defined as follows: for every composable sequence $f_1,...,f_k$ of horizontal morphisms in $\mathcal{B}$ and for every evaluation $\Phi$ of $f_1,...,f_k$, the image $p\Phi$ of $\Phi$ under $p$ is equal to the horizontal composition of the sequence $f_1,...,f_k$ in $\mathcal{B}$ following the parenthesis pattern defining $\Phi$. We call $p$ the projection associated to $\mathcal{B}_1$.

\

\noindent \textit{The main construction: Inductive step}

\

\noindent Given a decorated bicategory $\mathcal{B}$ we will write $E_1$ for the collection of evaluations $\mathbb{G}_{s_0,t_0}$ of $\mathbb{G}$ with respect to the pair of functions $s_0,t_0$. We denote by $s_1$ and $t_1$ the functions $\tilde{s}_0$ and $\tilde{t}_0$. Thus defined $s_1$ and $t_1$ are functions from $E_1$ to Hom$_{\mathcal{B}^*}$. We write $\ast_1$ for the operation $\ast_{s_1,t_1}$ on $E_1$. Finally, we write $d_1$ for the composition $p\mu_{d_0,id}$, and we write $c_1$ for the composition $p\mu_{c_0,id}$. Thus defined $d_1$ and $c_1$ are functions from $E_1$ to the set of horizontal morphisms $\mathcal{B}_1$ of $\mathcal{B}$. The following theorem is the first step towards the free globularily generated double category construction.

\begin{thm}\label{InductiveStep}
Let $\mathcal{B}$ be a decorated bicategory. There exists a pair of sequences of triples $(E_k,d_k,c_k)$ and $(F_k,s_{k+1},t_{k+1})$, such that for each $k$, $E_k$ is a set containing $E_1$, $d_k,c_k$ are functions from $E_k$ to $\mathcal{B}_1$ extending the functions $d_1$ and $c_1$ defined above, $F_k$ is a category having $\mathcal{B}_1$ as collection of objects, and $s_{k+1},t_{k+1}$ are functors from $F_k$ to $\mathcal{B}^*$. The pair of sequences $(E_k,d_k,c_k)$ and $(F_k,s_{k+1},t_{k+1})$ satisfies the following conditions:

\begin{enumerate}
\item For every $k$, Hom$_{F_k}$ is contained in $E_{k+1}$. Moreover, $E_{k+1}$ is equal to the set of evaluations Hom$_{F_k s_{k+1},t_{k+1}}$ of Hom$_{F_k}$ with respect to the pair formed by the morphism functions of $s_{k+1}$ and $t_{k+1}$. 

\item For every $k$, $E_k$ is contained in Hom$_{F_k}$. Moreover, $F_k$ is equal to the free category generated by $E_k$ with functions $d_k$ and $c_k$ as domain and codomain functions respectively. The restriction of the morphism functions of $s_{k+1}$ and $t_{k+1}$ to $E_1$ are equal to the functions $s_1$ and $t_1$ defined above.

\item For every positive integer $k$ the following triangles commute:

\begin{center}

\begin{tikzpicture}
\matrix(m)[matrix of math nodes, row sep=4em, column sep=4em,text height=1.5ex, text depth=0.25ex]
{E_k&\mathcal{B}_1&E_k&\mathcal{B}_1\\
&\mathcal{B}_0&&\mathcal{B}_0\\};
\path[->,font=\scriptsize]
(m-1-1) edge node[auto] {$d_k,c_k$} (m-1-2)
        edge node[left] {$s_{k+1}$} (m-2-2)			
(m-1-2) edge node[auto] {$dom$} (m-2-2)
(m-1-3) edge node[auto] {$d_k,c_k$} (m-1-4)
(m-1-4) edge node[auto]	{$codom$} (m-2-4)			
(m-1-3) edge node[left] {$t_{k+1}$} (m-2-4);
\end{tikzpicture}

\end{center}
\end{enumerate}

\noindent The conditions 1-3 above determine the pair of sequences of triples $(E_k,d_k,c_k)$ and $(F_k,s_{k+1},t_{k+1})$

\end{thm}

\begin{proof}

Let $\mathcal{B}$ be a decorated bicategory. We wish to construct a pair of sequences of triples $(E_k,d_k,c_k)$ and $(F_k,s_{k+1},t_{k+1})$ with $k$ running through the collection of all positive integers, such that for each positive integer $k$, $E_k$ is a set extending to the set $E_1$ associated to $\mathcal{B}$, such that $d_k$ and $c_k$ are functions from $E_k$ to the collection $\mathcal{B}_1$ of horizontal morphisms of $\mathcal{B}$ extending the functions $d_1$ and $c_1$, such that $F_k$ is a category having $\mathcal{B}_1$ as set of objects and $s_{k+1},t_{k+1}$ are functors from $F_k$ to $\mathcal{B}^*$. Moreover, we wish to define the pair of sequences $(E_k,d_k,c_k)$ and $(F_k,s_{k+1},t_{k+1})$ in such a way that conditions 1-3 above are satisfied.

We proceed by induction on $k$. We begin by defining the triple $(F_1,s_2,t_2)$. We make $F_1$ to be the free category generated by $E_1$ with respect to $d_1,c_1$. The functions $s_1$ and $t_1$ are compatible with $d_1$ and $c_1$ and thus admit a unique extension to functors from $F_1$ to the decoration $\mathcal{B}^*$ of $\mathcal{B}$. We make $s_2$ and $t_2$ be the corresponding functorial extensions of $s_1$ and $t_1$. We write $\bullet_1$ for the composition operation in $F_1$. 

Let now $k$ be a positive integer strictly greater than 1. Assume that we have extended the definition of the triple $(E_1,d_1,c_1)$ and the definition of the triple $(F_1,s_2,t_2)$ of the previous paragraph, to a sequence of pairs of triples $(E_m,d_m,c_m)$ and $(F_m,s_{m+1},t_{m+1})$ for every $m\leq k$, where $E_m$ is assumed to be a set containing $E_1$, $d_m$ and $c_m$ are assumed to be functions from $E_m$ to $\mathcal{B}_1$ extending $d_1$ and $c_1$, $F_m$ is assumed to be a category containing $F_1$ as subcategory, and $s_{m+1}$ and $t_{m+1}$ are assumed to be functors from $F_m$ to $\mathcal{B}^*$ extending $s_2$ and $t_2$ respectively. Moreover, we assume that the pair of sequences of triples $(E_m,d_m,c_m)$ and $(F_m,s_{m+1},t_{m+1})$ satisfies conditions 1-3 above.

We now wish to extend the definition of the pair of sequences of triples $(E_m,d_m,c_m)$ and $(F_m,s_{m+1},t_{m+1})$ to the definition of a pair $(E_k,d_k,c_k)$ and $(F_k,s_{k+1},t_{k+1})$ satisfying the conditions of the theorem. We begin with the definition of the triple $(E_k,d_k,c_k)$. We make $E_k$ to be the collection of evaluations Hom$_{F_{k-1} s_k,t_k}$ of Hom$_{F_{k-1}}$ with respect to the pair formed by the morphism functions of $s_k$ and $t_k$. We write $s_{k+1}$ and $t_{k+1}$ for the extensions $\tilde{s}_k$ and $\tilde{t}_k$, to $E_k$, of the morphism functions of $s_k$ and $t_k$. We denote by $\ast_k$ the concatenation operation $\ast_{s_{k+1},t_{k+1}}$ in $E_k$, with respect to $s_{k+1}$ and $t_{k+1}$. We now make the function $d_k$ to be the composition $p\mu_{d_{k-1},id}$ of the function associated to the pair formed by the domain function $d_{k-1}$ in $F_{k-1}$ and the identity function in the collection of horizontal morphisms $\mathcal{B}_1$ of $\mathcal{B}$ and the projection $p$ associated to the collection of horizontal morphisms $\mathcal{B}_1$ of $\mathcal{B}$. We make $c_k$ to be the composition $p\mu_{c_{k-1},id}$ of the function associated to the pair formed by the codomain function in $F_{k-1}$ and the identity function in the collection of horizontal morphisms $\mathcal{B}_1$ of $\mathcal{B}$, and the projection $p$ associated to the collection of horizontal morphisms $\mathcal{B}_1$ of $\mathcal{B}$. The functions $d_k$ and $c_k$ are well defined. Thus defined $d_k$ and $c_k$ are functions from $E_k$ to $\mathcal{B}_1$ satisfying condition 3 of the theorem by lemma 3.1. We now define the triple $(F_k,s_{k+1},t_{k+1})$. We make the category $F_k$ to be the free category generated by $E_k$, with $d_k$ and $c_k$ as domain and codomain functions. The collection of objects of $F_k$ is thus $\mathcal{B}_1$. We write $\bullet_k$ for the composition operation in $F_k$. By the fact that the functions $d_k,c_k,s_k$ and $t_k$ satisfy the condition 3 of the theorem it follows that the pairs formed by $s_k$ and $t_k$ together with the domain and codomain functions defined on $\mathcal{B}_1$ admit unique extensions to functors from $F_k$ to $\mathcal{B}^*$. We make $s_{k+1}$ and $t_{k+1}$ to be these functors. Thus defined the triple $(F_k,s_{k+1},t_{k+1})$ satisfies the conditions of the theorem.

It is obvious that conditions 1-3 of the theorem determine the pair of sequences $(E_k,d_k,c_k)$ and $(F_k,s_{k+1},t_{k+1})$. This concludes the proof.
\end{proof}

\begin{obs}\label{ObsOnInductiveStep1}
Let $\mathcal{B}$ be a decorated bicategory. Let $m,k$ be positive integers such that $m$ is less than or equal to $k$. In that case, as defined above, $E_m$ is contained in $E_k$, $d_m$ and $c_m$ are equal to the restrictions to $E_k$ of $d_k$ and $c_k$ respectively, and the concatenation operation $\ast_m$ is equal to the restriction to $E_m$ of $\ast_k$. Moreover, $F_m$ is a subcategory of $F_k$ and the functors $s_{m+1},t_{m+1}$ are equal to the restrictions to $F_m$ of $s_{k+1}$ and $t_{k+1}$ respectively. 
\end{obs}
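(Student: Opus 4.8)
The plan is to induct on $k$. Write $Q(k)$ for the conjunction of the six assertions applied to the consecutive pair $k-1,k$: the inclusion $E_{k-1}\subseteq E_k$, the agreement of $d_{k-1},c_{k-1}$ with the restrictions of $d_k,c_k$, the agreement of $s_k,t_k$ with the restrictions of $s_{k+1},t_{k+1}$ to $F_{k-1}$, the agreement of $\ast_{k-1}$ with the restriction of $\ast_k$, and the subcategory relation $F_{k-1}\subseteq F_k$. The Observation for an arbitrary pair $m\le k$ then follows from $Q(m+1),\dots,Q(k)$ by transitivity of inclusion, of equality of restricted functions, and of the subcategory relation. So it suffices to prove $Q(k)$ for every $k\ge 2$, which I do by induction on $k$, the proof of $Q(k)$ using $Q(k-1)$.

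The conceptual point is that the inclusion $E_{k-1}\subseteq E_k$ is not the one-term-evaluation inclusion but the one obtained by reinterpreting leaves. For $k\ge 3$ an element of $E_{k-1}=(\mathrm{Hom}_{F_{k-2}})_{s_{k-1},t_{k-1}}$ is a parenthesized word whose letters are morphisms of $F_{k-2}$; by $Q(k-1)$ one has $F_{k-2}\subseteq F_{k-1}$ with $s_k,t_k$ restricting to $s_{k-1},t_{k-1}$ there, so the very same word is a compatible word of morphisms of $F_{k-1}$, hence an element of $(\mathrm{Hom}_{F_{k-1}})_{s_k,t_k}=E_k$. For the base case $k=2$ the same argument works with $F_1$ in place of $F_{k-1}$, using the elementary facts that $\mathbb{G}\subseteq E_1\subseteq\mathrm{Hom}_{F_1}$ (the second inclusion is condition 2 of the preceding theorem) and that $s_2,t_2$ restrict on $\mathbb{G}$ to $s_0,t_0$ — the latter because $s_2$ restricts on $E_1$ to $s_1=\tilde s_0$ and $\tilde s_0$ of the one-term evaluation of $g$ is $s_0 g$. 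In all cases the inclusion $E_{k-1}\subseteq E_k$ is the identity on underlying formal expressions.

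Granting this, the remaining assertions of $Q(k)$ are unwindings of the definitions. For the domain and codomain: $d_k=p\,\mu_{d_{k-1},\mathrm{id}}$ where $d_{k-1}$ here means the domain function of the category $F_{k-1}$; applied to $\Phi\in E_{k-1}$, whose letters are morphisms of $F_{k-2}$, the map $\mu_{d_{k-1},\mathrm{id}}$ replaces each letter $x$ by its $F_{k-1}$-domain, which equals its $F_{k-2}$-domain since $F_{k-2}\subseteq F_{k-1}$, and then $p$ contracts; carrying out the same computation one level down yields exactly the value of the $(k-1)$-st construction's $d_{k-1}$ on $\Phi$ (for $k=2$ one uses in addition that the $F_1$-domain of a generator $g\in E_1$ is $p\,\mu_{d_0,\mathrm{id}}((g))=d_0 g$). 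Hence $d_k|_{E_{k-1}}=d_{k-1}$, and symmetrically for $c$. The subcategory relation $F_{k-1}\subseteq F_k$ follows since both are free categories, on the graphs $(\mathcal B_1,E_{k-1},d_{k-1},c_{k-1})$ and $(\mathcal B_1,E_k,d_k,c_k)$ respectively, and by what was just shown the first is a subgraph of the second; composition is juxtaposition of paths in both, so no identifications are forced. For the functors, $s_{k+1}$ restricts on $E_k$ to $\tilde s_k$, hence on $E_{k-1}$ to $\tilde s_k$ restricted there; since $\tilde s_k$ of an evaluation of $x_1,\dots,x_j$ is $s_k x_1$ and $s_k|_{F_{k-2}}=s_{k-1}$, this equals $\tilde s_{k-1}$ on $E_{k-1}$; on objects both $s_{k+1}$ and $s_k$ are the domain function $\mathcal B_1\to\mathcal B_0$ by condition 3; by the universal property of the free category $F_{k-1}$ we get $s_{k+1}|_{F_{k-1}}=s_k$, and likewise for $t$. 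Finally $\ast_{k-1}$ and $\ast_k$ are both the operation forming from two compatible words the single word having them as its two immediate subwords, with compatibility tested by $\tilde s,\tilde t$; since $E_{k-1}\subseteq E_k$ is the identity on formal expressions and $\tilde s,\tilde t$ agree on $E_{k-1}$, the operation $\ast_{k-1}$ is the restriction of $\ast_k$. This establishes $Q(k)$ and completes the induction.

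The main obstacle is purely organizational rather than mathematical: one must recognize that the inclusions $E_{k-1}\subseteq E_k$ are the leaf-reinterpretation inclusions — this is precisely what makes the concatenation operations restrict to one another — and one must keep the bookkeeping straight between the function $d_{k-1}$ produced by the $(k-1)$-st step of the preceding theorem, the domain function of the category $F_{k-1}$, and their compatibility (and similarly for $s,t$). No new idea beyond the preceding theorem and the inductive hypothesis is required.
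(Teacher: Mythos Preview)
Your proof is correct. The paper states this as an Observation without proof, treating it as immediate from the recursive construction in the preceding theorem; you have supplied the details, and in particular you correctly isolate the one genuinely nonobvious point: that the inclusion $E_{k-1}\subseteq E_k$ must be the leaf-reinterpretation inclusion (viewing each leaf in $\mathrm{Hom}_{F_{k-2}}$ as a leaf in $\mathrm{Hom}_{F_{k-1}}$ via $F_{k-2}\subseteq F_{k-1}$) rather than the one-term-evaluation inclusion, since only the former makes the concatenation operations $\ast_{k-1}$ and $\ast_k$ agree on $E_{k-1}$. Your handling of the overloaded symbol $d_{k-1}$ (as both the constructed function on $E_{k-1}$ and the domain function of the free category $F_{k-1}$) and your reduction to consecutive pairs via transitivity are exactly what is needed; nothing is missing.
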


\begin{obs}\label{ObsOnInductiveStep2}
Let $\mathcal{B}$ be a decorated bicategory. Let $k$ be a positive integer. The following two squares commute where $q^k_1$ and $q^k_2$ denote the left and the right projections of $E_k\times_{\mbox{Hom}_{\mathcal{B}^*}}E_k$ onto $E_k$ respectively.

\begin{center}

\begin{tikzpicture}
\matrix(m)[matrix of math nodes, row sep=4em, column sep=3em,text height=1.5ex, text depth=0.25ex]
{E_k\times_{\mbox{Hom}_{\mathcal{B}^*}}E_k&E_k&E_k\times_{\mbox{Hom}_{\mathcal{B}^*}}E_k&E_k\\
E_k&\mbox{Hom}_{\mathcal{B}^*}&E_k&\mbox{Hom}_{\mathcal{B}^*}\\};
\path[->,font=\scriptsize]
(m-1-1) edge node[auto] {$\ast_k$}  (m-1-2)
        edge node[left] {$q^k_1$}   (m-2-1)			
(m-1-2) edge node[auto] {$s_{k}$} (m-2-2)
(m-2-1) edge node[below]{$s_{k}$} (m-2-2)
(m-1-3) edge node[auto] {$\ast_k$}  (m-1-4)
(m-1-4) edge node[auto]	{$t_{k}$} (m-2-4)			
(m-1-3) edge node[left] {$q^k_2$}   (m-2-3)
(m-2-3) edge node[below]{$s_{k}$} (m-2-4);
\end{tikzpicture}

\end{center}

\end{obs}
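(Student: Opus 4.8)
The plan is to deduce the statement directly from the definitions of the concatenation operation $\ast_{s,t}$ and of the induced maps $\tilde{s},\tilde{t}$ on evaluations given in the subsection on evaluations above, instantiated at the data produced by the theorem. The first step is to recall what $E_k$, $\ast_k$ and the source and target maps appearing in the two squares actually are: by the construction carried out in the proof of the theorem, $E_k$ is a collection of evaluations of the shape $X_{s,t}$ --- with $X=\mathbb{G}$ and $(s,t)=(s_0,t_0)$ when $k=1$, and with $X=\mathrm{Hom}_{F_{k-1}}$ and $(s,t)$ the morphism functions of $s_k,t_k$ when $k>1$ --- in such a way that $\ast_k$ is precisely the operation $\ast_{s,t}$ on $X_{s,t}$, the fibered product $E_k\times_{\mathrm{Hom}_{\mathcal{B}^*}}E_k$ is the one formed along the pair $\tilde{s},\tilde{t}$, and the maps $E_k\to\mathrm{Hom}_{\mathcal{B}^*}$ occurring in the diagram are exactly $\tilde{s},\tilde{t}$ (equivalently, the restrictions to $E_k$ of the functors $s_{k+1},t_{k+1}$ of the theorem, and $s_1,t_1$ when $k=1$). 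Since this description is uniform in $k$, no induction is required: the same argument handles all $k$ at once.

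Given this, the second step is a one-line unwinding. Take $(\Psi,\Phi)$ in $E_k\times_{\mathrm{Hom}_{\mathcal{B}^*}}E_k$, so that $\Psi$ and $\Phi$ are evaluations of $(s,t)$-compatible sequences $y_1,\dots,y_m$ and $x_1,\dots,x_n$ with $\tilde{t}$ of one agreeing with $\tilde{s}$ of the other, as required by the fibration. By the definition of $\ast_{s,t}$, the image $\ast_k(\Psi,\Phi)$ is an evaluation of the concatenated compatible sequence obtained by juxtaposing the two parenthesized words; and by the definition of $\tilde{s}$ and $\tilde{t}$, the value of $\tilde{s}$ on an evaluation is the source of its first term and the value of $\tilde{t}$ is the target of its last term. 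Reading these two facts off the concatenation shows that $\tilde{s}\circ\ast_k$ factors through the projection onto the evaluation contributing the head of the concatenation followed by $\tilde{s}$, and that $\tilde{t}\circ\ast_k$ factors through the projection onto the evaluation contributing the tail followed by $\tilde{t}$. Matching these projections with the labels $q^k_1,q^k_2$ as fixed by the chosen convention for the fibered product, this is exactly the commutativity of the two displayed squares.

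I do not expect a genuine mathematical obstacle; the only point demanding care is bookkeeping. One must pair each projection $q^k_i$ with the correct one of $\tilde{s},\tilde{t}$ so that the squares commute as drawn, rather than merely after transposing the two factors, and one must keep in mind that for $k>1$ the maps written $s_k,t_k$ in the diagram are the evaluation-extensions on all of $E_k$ (the restrictions to $E_k$ of what the theorem calls $s_{k+1},t_{k+1}$), not the functors defined only on the proper subset $\mathrm{Hom}_{F_{k-1}}\subseteq E_k$. Both points are forced by the way $E_k\times_{\mathrm{Hom}_{\mathcal{B}^*}}E_k$ and $\ast_k=\ast_{s,t}$ were set up --- the fibration along $\tilde{s},\tilde{t}$ being chosen precisely so that $\ast_{s,t}$ is total on it --- so the verification amounts to quoting the relevant clauses of the evaluations subsection. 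Part~1 of the lemma on $\mu_{\varphi,\phi}$ records the analogous compatibility for the relabeling maps; it is not needed here, since this observation concerns a single decorated bicategory.
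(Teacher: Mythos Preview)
Your proposal is correct and matches the paper's treatment: the paper states this as an observation without proof, relying exactly on the fact that $E_k$ is by construction a collection of evaluations $X_{s,t}$, that $\ast_k$ is the concatenation operation $\ast_{s,t}$, and that the source/target maps are the extensions $\tilde s,\tilde t$, so commutativity is immediate from the definitions in the evaluations subsection. Your care in noting that the maps labeled $s_k,t_k$ in the diagram must be read as the extended maps on all of $E_k$ (i.e., the restrictions of $s_{k+1},t_{k+1}$) is well placed and resolves the only potential ambiguity.
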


\

\noindent \textit{The main construction: Taking limits}

\

\noindent As the next step in the free globularily generated double category construction we apply a limiting procedure to the pieces of structure obtained in Theorem \ref{InductiveStep}. We define an equivalence relation $R_\infty$ such that after taking quotients modulo $R_\infty$ we will obtain the necessary relations defining a double category. We keep the notation from Theorem \ref{InductiveStep}.

\begin{notation}\label{LimitNotation}
Let $\mathcal{B}$ be a decorated bicategory. We write $E_\infty$ for $\bigcup_{k=1}^\infty E_k$. We write $d_\infty$ and $c_\infty$ for $\varinjlim d_k$ and $\varinjlim c_k$ respectively. Thus defined $d_\infty$ and $c_\infty$ are functions from $E_\infty$ to $\mathcal{B}_1$. We write $\ast_\infty$ for $\varinjlim \ast_k$. Thus defined $\ast_\infty$ is a function from $E_\infty\times_{\mathcal{B}^*}E_\infty$ to $E_\infty$. We write $F_\infty$ for the category $\varinjlim F_k$. The collection of objects of $F_\infty$ is the collection of horizontal morphisms $\mathcal{B}_1$ of $\mathcal{B}$, and the collection of morphisms of $F_\infty$ is $E_\infty$. The domain and codomain functions of $F_\infty$ are $d_\infty$ and $c_\infty$. We write $s_\infty,t_\infty$ for $\varinjlim s_k$ and $\varinjlim_k$ respectively. Thus defined $s_\infty$ and $t_\infty$ are functors from $F_\infty$ to $\mathcal{B}^*$. Finally, we write $\bullet_\infty$ for the composition operation of $F_\infty$. Thus defined $\bullet_\infty$ is equal to $\varinjlim \bullet_k$ where for every $k$ $\bullet_k$ is the composition operation of $F_k$.
\end{notation}

\begin{definition}\label{RInfinityDefinition}
Let $\mathcal{B}$ be a decorated bicategory. We write $R_\infty$ for the equivalence relation generated by the following relations defined on $E_\infty$:

\begin{enumerate}

\item Let $\Phi_i,\Psi_i$, $i=1,2$ be morphisms in $F_\infty$ such that the pairs $\Phi_1,\Phi_2$ and $\Psi_1,\Psi_2$ are compatible with respect to the pair $s_\infty,t_\infty$ and such that the pairs $\Phi_i,\Psi_i$, $i=1,2$ are both compatible with respect to the pair $d_\infty,c_\infty$. We identify the compositions:

\[(\Psi_2\ast_\infty\Psi_1)\bullet_\infty(\Phi_2\ast_\infty\Phi_1) \ \mbox{and} \ (\Psi_2\bullet_\infty\Phi_2)\ast_\infty(\Psi_1\bullet_\infty\Phi_1)\]

\item Let $\Phi$ and $\Psi$ be globular squares of $\mathcal{B}$ such that the pair $\Phi,\Psi$ is compatible with respect to $s_\infty,t_\infty$. We identify $\Psi\bullet_\infty\Phi$ with the vertical compostion $\Psi\Phi$ of $\Phi$ and $\Psi$ in $\mathcal{B}$. Moreover, if $\alpha$ and $\beta$ are vertical morphisms in $\mathcal{B}$ such that the pair $\alpha,\beta$ is composable in $\mathcal{B}^*$, then we identify $i_\beta\bullet_\infty i_\alpha$ with $i_{\beta\alpha}$.

\item Let $\Phi$ and $\Psi$ be globular squares of $\mathcal{B}$ such that the pair $\Phi,\Psi$ is compatible with respect to $s_\infty,t_\infty$. We identify $\Psi\ast_\infty\Phi$ with the horizontal composition $\Psi\ast\Phi$ in $\mathcal{B}$.

\item Let $\Phi$ be a morphism in $F_\infty$. We identify $\Phi$ with the compositions

\[\lambda_{c_\infty\Phi}\bullet_\infty(\Phi\ast_\infty i_{t_\infty\Phi})\bullet_\infty \lambda_{d_\infty\Phi}^{-1} \ \mbox{and} \ \rho_{c_\infty\Phi}\bullet_\infty(i_{s_\infty\Phi}\ast_\infty \Phi)\bullet_\infty \rho_{d_\infty\Phi}^{-1}\]

\noindent where $\lambda$ and $\rho$ denote the left and right identity transformations of the bicategory underlying $\mathcal{B}$.

\item Let $\Phi,\Psi,\Theta$ be elements of $E_\infty$ such that the triple $\Phi,\Psi,\Theta$ is compatible with respect to the pair $s_\infty,t_\infty$. In that case we identify the compositions:

\[A_{c_{\infty}\Phi,c_{\infty}\Psi,c_{\infty}\Theta}\bullet_\infty[\Theta\ast_\infty(\Psi\ast_\infty\Phi)] \ \mbox{and} \ [(\Theta\ast_\infty\Psi)\ast_\infty \Phi]\bullet_\infty A_{d_{\infty}\Phi,d_{\infty}\Psi,d_{\infty}\Theta}\]

\noindent where $A$ denotes the associator of the bicategory underlying $\mathcal{B}$.
\end{enumerate}

\end{definition}

\begin{lemma}\label{CompatibilityRInfinity}
Let $\mathcal{B}$ be a decorated bicategory. $R_\infty$ is compatible with the domain, codomain, and composition operation functions $d_\infty,c_\infty$, and $\bullet_\infty$ of $F_\infty$.
\end{lemma}

\begin{proof}
Let $\mathcal{B}$ be a decorated bicategory. We wish to prove that the equivalence relation $R_\infty$ is compatible with domain, codomain, and composition operation functions $d_\infty,c_\infty$, and $\bullet_\infty$ defining the category structure on $F_\infty$.

Let $\Phi_i,\Psi_i$, $i=1,2$ be morphisms in $F_\infty$ such that the pairs $\Phi_1,\Phi_2$ and $\Psi_1,\Psi_2$ are compatible with respect to source and target functors $s_\infty,t_\infty$ of $F_\infty$ and such that the pairs $\Phi_i,\Psi_i$, $i=1,2$ are compatible with respect to domain and codomain functions $d_\infty,c_\infty$. In that case the domain 

\[d_\infty (\Psi_1\ast_\infty\Psi_1)\bullet_\infty(\Phi_2\ast_\infty\Phi_1)\]

\noindent of $(\Psi_1\ast_\infty\Psi_1)\bullet_\infty(\Phi_2\ast_\infty\Phi_1)$ is equal to the domain $d_\infty(\Phi_2\ast_\infty\Phi_1)$ of $(\Phi_2\ast_\infty\Phi_1)$ which in turn is equal to the composition $d_\infty\Phi_2 d_\infty\Phi_1$ of the domains $d_\infty\Phi_1$ and $d_\infty\Phi_2$ of $\Phi_1$ and $\Psi_1$. Now, the domain 

\[d_\infty (\Psi_2\bullet_\infty\Phi_2)\ast_\infty(\Psi_1\bullet_\infty\Phi_1)\]

\noindent of the composition $(\Psi_1\bullet_\infty\Phi_1)\ast_\infty(\Psi_2\bullet_\infty\Phi_2)$ is equal to the composition 

\[d_\infty (\Psi_2\bullet_\infty\Phi_2)d_\infty(\Psi_1\bullet_\infty\Phi_1)\]

\noindent of the domain $d_\infty (\Psi_1\bullet_\infty\Phi_1)$ of the composition $\Psi_1\bullet_\infty\Phi_1$ and the domain $d_\infty(\Psi_2\bullet_\infty\Phi_2)$ of the composition $\Psi_2\bullet_\infty\Phi_2$. The domain $d_\infty (\Psi_1\bullet_\infty\Phi_1)$ of the composition $\Psi_1\bullet_\infty\Phi_1$ is equal to the domain $d_\infty\Phi_1$ of $\Phi_1$ and the domain $d_\infty(\Psi_2\bullet_\infty\Phi_2)$ of the composition $\Psi_2\bullet_\infty\Phi_2$ is equal to the domain $d_\infty \Phi_2$ of $\Phi_2$. Thus the domain 

\[d_\infty(\Psi_2\bullet_\infty\Phi_2)\ast_\infty(\Psi_1\bullet_\infty\Phi_1)\]

\noindent of the composition $(\Psi_2\bullet_\infty\Phi_2)\ast_\infty(\Psi_1\bullet_\infty\Phi_1)$ is equal to the composition $d_\infty d_\infty\Phi_2 d_\infty\Phi_1$ of the domain $d_\infty\Phi_1$ of $\Phi_1$ and the domain $d_\infty\Phi_2$ of $\Phi_2$. We conclude that equivalence relation 1 in the definition of $R_\infty$ is compatible with respect to the domain function $d_\infty$ of $F_\infty$. A similar computation proves that relation 1 in the definition of $R_\infty$ is compatible with respect to the codomain function $c_\infty$ of $F_\infty$.

Let now $\Phi,\Psi$ be elements of $\mathbb{G}$. suppose pair $\Phi,\Psi$ is compatible with respect to the domain and codomain functions $d_\infty$ and $c_\infty$ of $F_\infty$. In that case the domain and codomain $d_\infty \Psi\bullet_\infty\Phi$ and $\Psi\bullet_\infty\Phi$ of the composition $\Psi\bullet_\infty\Phi$ are equal to the domain $d_\infty\Phi$ of $\Phi$ and the codomain $c_\infty\Psi$ of $\Psi$ respectively. The domain and codomain $d_\infty \Psi\Phi$ and $c_\infty \Psi\Phi$ of the vertical composition $\Psi\Phi$ of $\Phi$ and $\Psi$ is equal to the domain $d_\infty\Phi$ of $\Phi$ and the codomain $c_\infty\Psi$ of $\Psi$. We conclude that relation 2 in the definition of $R_\infty$ is compatible with the domain and codomain functions $d_\infty$ and $c_\infty$ of $F_\infty$.

Let $\Phi$ and $\Psi$ be globular squares in $\mathcal{B}$. Suppose that the pair $\Phi,\Psi$ is compatible with respect to the morphism functions of functors $s_\infty$ and $t_\infty$. In that case the domain $d_\infty \Psi\ast_\infty\Phi$ and the codomain $c_\infty\Psi\ast_\infty \Phi$ of the horizontal composition $\Psi\ast_\infty\Phi$ of $\Phi$ and $\Psi$ are equal to the compositions, in $\mathcal{B}$, $d_\infty\Psi d_\infty\Phi$ and $c_\infty\Psi d_\infty\Phi$ respectively. Now, the domain $d_\infty\Psi\ast\Phi$ and the codomain $c_\infty\Psi\ast\Phi$ of the horizontal composition, in $\mathcal{B}$, of $\Phi$ and $\Psi$ is equal to the compositions $d_\infty\Phi d_\infty\Psi$ and $c_\infty\Phi c_\infty\Psi$ respectively. This proves that relation 3 in the definition of $R_\infty$ is compatible with the domain and codomain functions $d_\infty$ and $c_\infty$ of $F_\infty$.

Let now $\Phi$ be a general morphism in $F_\infty$. In that case the domain 

\[d_\infty \lambda_{c_\infty\Phi}\bullet_\infty(\Phi\ast_\infty i_{t_\infty\Phi})\bullet_\infty \lambda_{d_\infty\Phi}^{-1}\]

\noindent of the composition $\lambda_{c_\infty\Phi}\bullet_\infty(\Phi\ast_\infty i_{t_\infty\Phi})\bullet_\infty \lambda_{d_\infty\Phi}^{-1}$ is equal to the domain $d_\infty \lambda_{d_\infty\Phi}^{-1}$ of $\lambda_{d_\infty\Phi}^{-1}$, which is equal to the domain $d_\infty\Phi$ of $\Phi$. Similarly the domain 

\[d_\infty \rho_{c_\infty\Phi}\bullet_\infty(i_{s_\infty\Phi}\ast_\infty \Phi)\bullet_\infty \rho_{d_\infty\Phi}^{-1}\]

\noindent of the composition $\rho_{c_\infty\Phi}\bullet_\infty(i_{s_\infty\Phi}\ast_\infty \Phi)\bullet_\infty \rho_{d_\infty\Phi}^{-1}$ is equal to the domain $d_\infty \rho^{-1}_{d_\infty\Phi}$ of $\rho_{d_\infty\Phi}^{-1}$, which is equal to the domain $d_\infty \Phi$ of $\Phi$. We conclude, from this that relation 4 in the definition of $R_\infty$ is compatible with the domain function $d_\infty$ of $F_\infty$. An analogous computation proves that relation 4 in the definition of $R_\infty$ is compatible with the codomain function $c_\infty$ of $F_\infty$.

Let $\Phi,\Psi,\Theta$ be general morphisms in $F_\infty$. Suppose that the triple $\Phi,\Psi,\Theta$ is compatible with respect to the morphism functions of $s_\infty$ and $t_\infty$. In that case the domain 

\[d_\infty A_{c_{\infty}\Phi,c_{\infty}\Psi,c_{\infty}\Theta}\bullet_\infty[\Theta\ast_\infty(\Psi\ast_\infty\Phi)]\]

\noindent is equal to the composition $A_{c_{\infty}\Phi,c_{\infty}\Psi,c_{\infty}\Theta}\bullet_\infty[\Theta\ast_\infty(\Psi\ast_\infty\Phi)]$ is equal to domain $d_\infty \Theta\ast_\infty(\Psi\ast_\infty\Phi)$ of the composition $\Theta\ast_\infty(\Psi\ast_\infty\Phi)$ which in turn is equal to the composition $d_\infty\Theta d_\infty\Psi d_\infty\Phi$ in $\mathcal{B}$. Now, the domain

\[d_\infty [(\Theta\ast_\infty\Psi)\ast_\infty \Phi]\bullet_\infty A_{d_{\infty}\Phi,d_{\infty}\Psi,d_{\infty}\Theta}\]

\noindent is equal to the domain $d_\infty A_{d_{\infty}\Phi,d_{\infty}\Psi,d_{\infty}\Theta}$ of the associator $A_{d_{\infty}\Phi,d_{\infty}\Psi,d_{\infty}\Theta}$ associated to the triple $\Phi,\Psi,\Theta$, which is, by definition, equal to the composition $d_\infty \Theta d_\infty \Psi d_\infty \Phi$ in $\mathcal{B}$. We conclude that relation 5 in the definition of $R_\infty$ is compatible with the domain function $d_\infty$ of $F_\infty$. An analogous computation proves that relation 5 in the definition of $R_\infty$ is compatible with the codomain function $c_\infty$ of $F_\infty$.

Finally, the fact that the equivalence relation $R_\infty$ is compatible with respect to the composition function $\bullet_\infty$ on $F_\infty$ follows from the fact that $F_\infty$ is the limit of a sequence of free categories.
\end{proof}

\

\noindent \textit{The main construction: Dividing by $R_\infty$}

\

\noindent As the next step of the free globularily generated double category construction we divide the category $F_\infty$ defined in \ref{LimitNotation} by the equivalence relation $R_\infty$ and we prove that the structure thus obtained is compatible with the rest of the pieces of structure in Theorem \ref{InductiveStep}.

\

\begin{definition}\label{Dividingdefinition}
Let $\mathcal{B}$ be a decorated bicategory. We write $V_\infty$ for the quotient category $F_\infty/R_\infty$. We keep writing $d_\infty,c_\infty$, and $\bullet_\infty$ for the domain, codomain, and composition operation functions in $V_\infty$. We write $H_\infty$ for the collection of morphisms of $V_\infty$. Thus defined $H_\infty$ is equal to the quotient $E_\infty/R_\infty$ of the collection of morphisms $E_\infty$ of $F_\infty$ modulo $R_\infty$.
\end{definition}

\begin{lemma}\label{Dividinglemma}
Let $\mathcal{B}$ be a decorated bicategory. In that case the source and target functors $s_\infty$ and $t_\infty$, and the horizontal composition functor $\ast_\infty$ are all compatible with $R_\infty$. 
\end{lemma}

\begin{proof}
Let $\mathcal{B}$ be a decorated bicategory. We wish to prove that in that the source and target functors $s_\infty$ and $t_\infty$, and the horizontal composition functor $\ast_\infty$ defined on $F_\infty$ associated to $\mathcal{B}$ are compatible with $R_\infty$.

Let first $\Phi_i,\Psi_i$, $i=1,2$ be morphisms in $F_\infty$ such that the pairs $\Phi_i,\Psi_i$, $i=1,2$ are compatible with respect to the domain and codomain functions $d_\infty$ and $c_\infty$ in $F_\infty$ and such that the pairs $\Phi_1,\Phi_2$ and $\Psi_1,\Psi_1$ are compatible with respect to the source and target functors $s_\infty,t_\infty$ in $F_\infty$. In that case the source 

\[s_\infty (\Psi_2\ast_\infty\Psi_1)\bullet_\infty(\Phi_2\ast_\infty\Phi_1)\]

\noindent of composition $(\Psi_2\ast_\infty\Psi_1)\bullet_\infty(\Phi_2\ast_\infty\Phi_1)$ is equal to the composition

\[s_\infty (\Psi_2\ast_\infty\Psi_1)s_\infty(\Phi_2\ast_\infty\Phi_1)\]

\noindent in the decoration $\mathcal{B}^*$ of $\mathcal{B}$ of the vertical morphisms $s_\infty(\Phi_2\ast_\infty\Phi_1)$ and $s_\infty (\Psi_2\ast_\infty\Psi_1)$. The source $s_\infty(\Phi_2\ast_\infty\Phi_1)$ of the concatenation $\Phi_2\ast_\infty\Phi_1$ is equal to the source $s_\infty\Phi_1$ of $\Phi_1$ and the source $s_\infty (\Psi_2\ast_\infty\Psi_1)$ of the concatenation $\Psi_2\ast_\infty\Psi_1$ is equal to the source $s_\infty\Psi_2$. The source 

\[s_\infty (\Psi_2\ast_\infty\Psi_1)\bullet_\infty(\Phi_2\ast_\infty\Phi_1)\]

\noindent of composition $(\Psi_2\ast_\infty\Psi_1)\bullet_\infty(\Phi_2\ast_\infty\Phi_1)$ is thus equal to the composition $s_\infty\Psi_1s_\infty\Phi_1$ in $\mathcal{B}^*$. Now the source

\[s_\infty(\Psi_2\bullet_\infty\Phi_2)\ast_\infty(\Psi_1\bullet_\infty\Phi_1)\]

\noindent of the concatenation $(\Psi_2\bullet_\infty\Phi_2)\ast_\infty(\Psi_1\bullet_\infty\Phi_1)$ is equal to the source $s_\infty(\Psi_1\bullet_\infty\Phi_1)$ of composition $\Psi_1\bullet_\infty\Phi_1$. Now, the source $s_\infty(\Psi_1\bullet_\infty\Phi_1)$ is equal to the composition $s_\infty\Psi_1 s_\infty\Phi_1$ of the source $s_\infty\Phi_1$ and $s_\infty\Psi_1$ in $\mathcal{B}^*$. This proves that the source functor $s_\infty$ is compatible with respect to relation 1 in the definition of $R_\infty$. An analogous argument proves that the target functor $t_\infty$ in $F_\infty$ is compatible with respect to relation 1 in the definition of $R_\infty$.

Let now $\Phi$ and $\Psi$ be morphisms in $\mathbb{G}$ such that the pair $\Phi,\Psi$ is compatible with respect to the domain and codomain functions $d_\infty$ and $c_\infty$ in $F_\infty$. Suppose first that $\Phi$ and $\Psi$ are globular morphisms in $\mathcal{B}$ such that the domain in $\mathcal{B}$ of the domain and codomain of $\Phi$ and $\Psi$ in $\mathcal{B}$ respectively are equal to the object $a$ in $\mathcal{B}$. In that case the source $s_\infty\Psi\bullet_\infty\Phi$ of the composition $\Psi\bullet_\infty\Phi$ is equal to the composition $s_\infty\Psi s_\infty\Phi$ of $s_\infty\Phi$ and $s_\infty\Phi$ in $\mathcal{B}^*$. Now the source $s_\infty\Psi$ of $\Phi$ and the source $s_\infty\Psi$ of $\Psi$ are both equal to the identity endomorphism $id_a$ of the object $a$ in the decoration $\mathcal{B}^*$ of $\mathcal{B}$. Now the domain in $\mathcal{B}$ of the vertical composition $\Psi\bullet\Phi$ in $\mathcal{B}$ is equal to the domain of $\Phi$ in $\mathcal{B}$ and thus the domain of the domain in $\mathcal{B}$ of the vertical composition $\Psi\bullet\Phi$ of $\Phi$ and $\Psi$ is equal to the object $a$ of $\mathcal{B}$. It follows, from this, that the source $s_\infty\Psi\bullet\Phi$ of the globular morphism in $\mathcal{B}$ formed as the vertical composition $\Psi\bullet\Phi$ of $\Phi$ and $\Psi$ in $\mathcal{B}$ is equal to the identity endomorphism $id_a$ of the object $a$ in $\mathcal{B}$. The source functor $s_\infty$ in $F_\infty$ is thus compatible with the restriction to the collection of globular morphisms of $\mathcal{B}$ of relation 2 in the definition of $R_\infty$. Suppose now that the morphisms $\Phi$ and $\Psi$ are formal horizontal identities $i_\alpha$ and $i_\beta$ respectively, of a composable pair of vertical morphisms $\alpha,\beta$ in $\mathcal{B}$. In this case the source $s_\infty i_\beta\bullet_\infty i_\alpha$ of the vertical composition $i_\beta\bullet_\infty i_\alpha$ of $i_\alpha$ and $i_\beta$ is equal to the composition $s_\infty i_\beta s_\infty i_\alpha$ of $s_\infty i_\alpha$ and $i_\beta$ in $\mathcal{B}^*$. The source $s_\infty i_\alpha$ is equal to the morphism $\alpha$ and the source $s_\infty i_\beta$ of $i_\beta$ is equal to the morphism $\beta$. We conclude that the source $s_\infty i_\beta\bullet_\infty i_\alpha$ of the composition $i_\beta\bullet_\infty i_\alpha$ is equal to the composition $\beta\alpha$ of $\alpha$ and $\beta$ in $\mathcal{B}^*$. Finally, the source $s_\infty i_{\beta\alpha}$ of the formal horizontal identity $i_{\beta\alpha}$ of the composition $\beta\alpha$ is equal to the composition $\beta\alpha$. This proves that the source functor $s_\infty$ is compatible with the restriction to the collection of formal horizontal identities of relation 2 in the definition of $R_\infty$. We conclude that the source functor $s_\infty$ is compatible with respect to relation 2 in the definition of $R_\infty$. An analogous argument proves that the target functor $t_\infty$ in $F_\infty$ is compatible with respect to relation 2 in the definition of $R_\infty$.

Let now $\Phi$ and $\Psi$ be globular morphisms in $\mathcal{B}$ such that the pair $\Phi,\Psi$ is compatible with respect to the source and the target functors $s_\infty,t_\infty$ in $F_\infty$. The source $s_\infty\Psi\ast_\infty\Phi$ of the concatenation $\Psi\ast_\infty\Phi$ of $\Phi$ and $\Psi$ is equal to the source $s_\infty\Phi$ which is equal to the identity endomorphism in $\mathcal{B}^*$ of the domain of the domain in $\mathcal{B}$ of $\Phi$. Now, the source $s_\infty\Psi\ast\Phi$ of the globular morphism in $\mathcal{B}$ formed as the horizontal composition $\Psi\ast\Phi$ in $\mathcal{B}$ of $\Phi$ and $\Psi$ is equal to the identity endomorphism in $\mathcal{B}^*$ of the domain of the domain in $\mathcal{B}$ of the composition $\Psi\ast\Phi$. The domain of the domain of the horizontal composition $\Phi\ast\Phi$ is equal to the domain of the domain of $\Phi$. We conclude that the source $s_\infty\Psi\ast\Phi$ of the horizontal composition $\Psi\ast\Phi$ is equal to the identity endomorphism in $\mathcal{B}^*$ of the domain of the domain in $\mathcal{B}$ of $\Phi$. The source functor $s_\infty$ in $\mathcal{B}$ is thus compatible with relation 3 in the definition of $R_\infty$. An identical argument proves that the target functor $t_\infty$ in $F_\infty$ is compatible with relation 3 in the definition of $R_\infty$.

Let now $\Phi$ be a general morphism in $F_\infty$. In that case the source

\[s_\infty \lambda_{c_\infty\Phi}\bullet_\infty(\Phi\ast_\infty i_{t_\infty\Phi})\bullet_\infty \lambda_{d_\infty\Phi}^{-1}\]

\noindent of the composition $\lambda_{c_\infty\Phi}\bullet_\infty(\Phi\ast_\infty i_{t_\infty\Phi})\bullet_\infty \lambda_{d_\infty\Phi}^{-1}$ is equal to the composition

\[s_\infty \lambda_{c_\infty\Phi}s_\infty(\Phi\ast_\infty i_{t_\infty\Phi})s_\infty \lambda_{d_\infty\Phi}^{-1}\] 

\noindent in $\mathcal{B}^*$ of $s_\infty \lambda_{c_\infty\Phi}$, $s_\infty(\Phi\ast_\infty i_{t_\infty\Phi})$, and $s_\infty \lambda_{d_\infty\Phi}^{-1}$. Now, since $\lambda_{c_\infty\Phi}$ and $\lambda_{d_\infty\Phi}$ are globular, the composition

\[s_\infty \lambda_{c_\infty\Phi}s_\infty(\Phi\ast_\infty i_{t_\infty\Phi})s_\infty \lambda_{d_\infty\Phi}^{-1}\]

\noindent is equal to the source $s_\infty\Phi\ast_\infty i_{t_\infty\Phi}$ of $\Phi\ast_\infty i_{t_\infty\Phi}$, which is equal to the source $s_\infty\Phi$ of $\Phi$. We conclude that the source functor $s_\infty$ on $F_\infty$ is compatible with relation 3 in the definition of $R_\infty$. An analogous argument proves that the target functor $t_\infty$ is compatible with relation 4 in the definition of $R_\infty$. Further, an analogous argument proves that the source and target functors $s_\infty$ and $t_\infty$ in $F_\infty$ are compatible with relation 5 in the definition of $R_\infty$.
\end{proof}

%What about composition?%

\

\noindent\textit{The main construction: Observations on lemma \ref{CompatibilityRInfinity}}

\

\noindent Before presenting the definition of the free globularily generated double category we present a few preliminary observations on lemma \ref{CompatibilityRInfinity}.

\

\begin{obs}\label{ObservationFinal1}
Let $\mathcal{B}$ be a decorated bicategory. By lemma 3.9 the functors $s_\infty$ and $t_\infty$ descend to functors from $V_\infty$ to $\mathcal{B}^*$. We keep denoting these functors by $s_\infty$ and $t_\infty$. Moreover, the composition operation function $\ast_\infty$ descends to a function from $H_\infty\times_{\mbox{Hom}_{\mathcal{B}^*}}H_\infty$ to $H_\infty$ such that, by relation 1 in the definition of $R_\infty$, together with the composition operation function for horizontal morphisms in $\mathcal{B}$ forms a functor from $V_\infty\times_{\mathcal{B}^*}V_\infty$ to $V_\infty$. We denote this functor by $\ast_\infty$. 
\end{obs}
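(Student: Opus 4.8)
The plan is to read off every assertion of the observation from the preceding two lemmas together with the universal properties of quotient categories and quotient sets. Write $\pi\colon F_\infty\to V_\infty$ for the canonical projection. By Lemma 3.7 the relation $R_\infty$ is a congruence on the category $F_\infty$, so $V_\infty$ is a genuine category, $\pi$ is a functor which is the identity on objects, and $\pi$ carries $\bullet_\infty$ to the composition of $V_\infty$. Now Lemma 3.9 says exactly that $s_\infty$ and $t_\infty$ are constant on $R_\infty$-classes of morphisms; since $R_\infty$ does not touch objects, this says that $s_\infty,t_\infty\colon F_\infty\to\mathcal{B}^*$ are constant on the fibres of $\pi$. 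The universal property of the quotient category then yields unique functors $V_\infty\to\mathcal{B}^*$ through which $s_\infty$ and $t_\infty$ factor, and these factorizations are what the statement renames $s_\infty$ and $t_\infty$; their functoriality is automatic from the factorization.

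Next I would descend $\ast_\infty$. The fibered product $E_\infty\times_{\mbox{Hom}_{\mathcal{B}^*}}E_\infty$ is taken along $s_\infty$ and $t_\infty$, so, these two being $R_\infty$-invariant, a composable pair of $R_\infty$-classes lifts to a composable pair of representatives and any two such lifts are $(R_\infty\times R_\infty)$-equivalent; hence the canonical comparison $(E_\infty\times_{\mbox{Hom}_{\mathcal{B}^*}}E_\infty)/(R_\infty\times R_\infty)\to H_\infty\times_{\mbox{Hom}_{\mathcal{B}^*}}H_\infty$ is a bijection. Since Lemma 3.9 also states that $\ast_\infty$ is compatible with $R_\infty$, it passes to the quotient along this bijection as a function $H_\infty\times_{\mbox{Hom}_{\mathcal{B}^*}}H_\infty\to H_\infty$, still denoted $\ast_\infty$; its compatibility with the domain, codomain, source and target maps of $V_\infty$ is inherited, stage by stage, from the way $d_k,c_k,s_{k+1},t_{k+1}$ interact with $\ast_k$ — recorded in the construction of Theorem 3.2, in Lemma 3.1, and in Observation 3.4 — which persists under the colimit defining $F_\infty$ and then under $\pi$.

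It remains to check that the pair whose object part is horizontal composition of $1$-cells of $\mathcal{B}$ and whose morphism part is the descended $\ast_\infty$ is a functor $V_\infty\times_{\mathcal{B}^*}V_\infty\to V_\infty$. On objects this is a well-defined assignment into the objects of $V_\infty$, which are the $1$-cells of $\mathcal{B}$, and the compatibility established at the end of the previous paragraph guarantees that the image $\Psi\ast_\infty\Phi$ of a morphism $(\Psi,\Phi)$ of $V_\infty\times_{\mathcal{B}^*}V_\infty$ has domain and codomain the horizontal composites of those of $\Psi$ and $\Phi$, so morphisms go to morphisms of $V_\infty$ lying over the image of the corresponding object morphism. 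Preservation of composition is precisely relation $1$ in the definition of $R_\infty$: for composable $\Phi_i,\Psi_i$, $i=1,2$, that relation forces $(\Psi_2\ast_\infty\Psi_1)\bullet_\infty(\Phi_2\ast_\infty\Phi_1)$ and $(\Psi_2\bullet_\infty\Phi_2)\ast_\infty(\Psi_1\bullet_\infty\Phi_1)$ to coincide in $V_\infty$, which is the interchange law. Preservation of identities holds because at the level of $F_\infty$ the concatenation of the empty word on a $1$-cell $f$ with the empty word on a $1$-cell $g$ is the empty word on the horizontal composite, and $\pi$ sends empty words to identities of $V_\infty$. This shows that the pair is a bifunctor $V_\infty\times_{\mathcal{B}^*}V_\infty\to V_\infty$.

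The points that need attention are organizational rather than computational. First, the identification of the quotient of the fibered product with the fibered product of the quotients in the second paragraph genuinely presupposes that $s_\infty$ and $t_\infty$ have already been pushed down to $V_\infty$ (that identification is false for arbitrary set quotients, and it is exactly $R_\infty$-invariance of $s_\infty,t_\infty$ that rescues it), which is why the first paragraph must precede the second. Second, "bifunctor" has to be read in the weak double-category sense, so that the unit and associativity constraints — to be supplied later by $\lambda$, $\rho$ and $A$ through relations $4$ and $5$ of $R_\infty$ — are not at issue here, and only honest functoriality of $\ast_\infty$ as a functor of two variables, namely relation $1$ together with preservation of identities, has to be verified at this stage.
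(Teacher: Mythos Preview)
Your proposal is correct and follows exactly the route the paper indicates: the statement is recorded there as an observation with no accompanying proof, simply citing Lemma~3.9 for the descent of $s_\infty,t_\infty,\ast_\infty$ and relation~1 of $R_\infty$ for the interchange law, which is precisely what you unpack.

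One small wrinkle worth tightening: your identity-preservation step claims that at the level of $F_\infty$ the $\ast_\infty$-concatenation of the empty word on $f$ with the empty word on $g$ is the empty word on $g\ast f$. That is not literally true---$\ast_\infty$ on $E_\infty$ is concatenation of evaluations, so the concatenation of two length-one evaluations is a length-two evaluation, not an empty word. Identity preservation in $V_\infty$ does hold, but it comes from the relations in $R_\infty$ (for instance, relation~3 gives $id_g^{\mathcal{B}}\ast_\infty id_f^{\mathcal{B}}\sim id_{g\ast f}^{\mathcal{B}}$, and relation~2 together with the free-category structure lets you identify $[id_f^{\mathcal{B}}]$ with the categorical identity of $V_\infty$ on $f$), not from a literal equality before quotienting. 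The paper does not spell this out either, so you are not diverging from it, but the sentence as written is the one place where your argument would not survive scrutiny as stated.
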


%Write function for when you have written composition functor%

%diagram: intertwine $s_\infty\pi_1$ and $t_\infty\pi_2$, where $\pi_1,\pi_2$ are the left and right projections of $E_\infty^\mathcal{B}\times_{\mathcal{B}^*}E_\infty^\mathcal{B}$, and functions $s_\infty$ and $t_\infty$%

\begin{obs}\label{Observationfinal2}
Let $\mathcal{B}$ be a decorated bicategory. Let $k$ be a positive integer. In that case the relation $R_\infty$ restricts to an equivalence relation in $E_k$. We denote by $H_k$ the quotient $E_k/R_\infty$ of $E_k$ modulo $R_\infty$. Moreover, $R_\infty$ restricts to an equivalence relation on the collection of morphisms Hom$_{F_k}$ of $F_k$. The relation $R_\infty$ is compatible with the domain and codomain functions $d_k$ and $c_k$ of $F_k$ and is thus compatible with the category structure of $F_k$. We denote by $V_k$ the quotient $F_k/R_\infty$ of $F_k$ modulo $R_\infty$ and keep denoting by $d_k,c_k$, and $\bullet_k$ the domain, the codomain, and the composition operation functions in $V_k$. The functors $s_{k+1}$ and $t_{k+1}$ are compatible with $R_\infty$ and thus induce functors from $V_k$ to the decoration $\mathcal{B}^*$ of $\mathcal{B}$. We keep denoting these functors by $s_{k+1}$ and $t_{k+1}$ respectively. Finally, observe that the function $\ast_k$ is compatible with $R_\infty$ and thus defines a function from the set of all morphisms of $V_k\times_{\mathcal{B}^*}V_k$ to the set of morphisms of $V_k$. This function, together with the composition operation function for horizontal morphisms in $\mathcal{B}$ forms a functor from $V_k\times_{\mathcal{B}^*}V_k$ to $V_k$. We keep denoting this functor by $\ast_k$.   
\end{obs}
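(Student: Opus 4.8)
The plan is to obtain every assertion of the observation, for a fixed positive integer $k$, by restricting the corresponding ``$\infty$''-level statement of Lemmas 3.7 and 3.9 along the inclusions of the finite stages of the construction recorded in Observation 3.3. To begin with, the statements that $R_\infty$ restricts to an equivalence relation on $E_k$ and on $\mathrm{Hom}_{F_k}$ are formal: the intersection of any equivalence relation with $S\times S$ is an equivalence relation on $S$, and by Observation 3.3 and Definition 3.5 both $E_k$ and $\mathrm{Hom}_{F_k}$ are subcollections of $E_\infty=\mathrm{Hom}_{F_\infty}$.

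The core of the argument is the identification of the level-$k$ data as restrictions of the level-$\infty$ data. Observation 3.3 gives $E_m\subseteq E_k$ and $F_m\subseteq F_k$ for $m\le k$, with $d_m,c_m,\ast_m,s_{m+1},t_{m+1}$ the restrictions of $d_k,c_k,\ast_k,s_{k+1},t_{k+1}$ respectively; passing to the colimits of Definition 3.5 then identifies $d_k,c_k,\bullet_k,s_{k+1},t_{k+1}$ and $\ast_k$ with the restrictions to $F_k$, respectively $E_k$, of $d_\infty,c_\infty,\bullet_\infty,s_\infty,t_\infty$ and $\ast_\infty$; here $\mathrm{Hom}_{F_k}$ is closed under $\bullet_\infty$ because $F_k$ is a category, and $E_k$ is closed under $\ast_\infty$ because $\ast_k$ was constructed in Theorem 3.2 as an operation valued in $E_k$. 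Granting this, if $\Phi\,R_\infty\,\Psi$ with $\Phi,\Psi\in\mathrm{Hom}_{F_k}$ then $d_k\Phi=d_\infty\Phi=d_\infty\Psi=d_k\Psi$ and $c_k\Phi=c_k\Psi$ by Lemma 3.7, and if moreover $\Phi'\,R_\infty\,\Psi'$ with $\Psi\bullet_k\Phi$ and $\Psi'\bullet_k\Phi'$ defined, then $\Psi\bullet_k\Phi=\Psi\bullet_\infty\Phi$ is $R_\infty$-related to $\Psi'\bullet_\infty\Phi'=\Psi'\bullet_k\Phi'$, again by Lemma 3.7; hence $R_\infty$ is compatible with the category structure of $F_k$ and $V_k=F_k/R_\infty$ is a well-defined category with object collection $\mathcal{B}_1$. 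The same reduction applied to Lemma 3.9 shows $s_{k+1},t_{k+1}$ and $\ast_k$ compatible with $R_\infty$, so they descend to functors $V_k\to\mathcal{B}^*$ and to a bifunction on the morphisms of $V_k$.

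It remains to verify the bifunctor axioms for the pair formed by this descended bifunction and horizontal composition of horizontal morphisms of $\mathcal{B}$ on objects. Preservation of identities is the second clause of relation 2 in Definition 3.6 read for formal horizontal identities, and the interchange identity is relation 1 of Definition 3.6: for $\Phi_i,\Psi_i\in E_k$ in the configuration of that relation, both sides of the interchange equation, formed inside $F_k$, coincide with the corresponding composites in $F_\infty$, which relation 1 identifies, hence coincide in $V_k$. The step I expect to demand the most care is that $\ast_k$ must send a pair of morphisms of $V_k$ to a morphism of $V_k$: since concatenation is defined a priori only on $E_k$, one must check that every morphism of $V_k$ has a representative in $E_k$ — equivalently, that a $\bullet_k$-string of elements of $E_k$ can be rewritten modulo $R_\infty$ as a single evaluation — which is where relations 1, 4 and 5 of Definition 3.6, absorbing left and right units and reassociating, come in. Once this well-definedness is in place, every remaining assertion is a direct transcription of Lemmas 3.7 and 3.9 to the finite stages of the filtration.
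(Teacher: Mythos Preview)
The paper records this as an observation without proof, so there is no argument to compare against; your restriction strategy---pulling back the compatibility statements of Lemmas 3.7 and 3.9 along the inclusions of Observation 3.3 and Definition 3.5---is exactly the implicit reasoning and disposes of everything up to the final bifunctor assertion cleanly.

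Where you should be more cautious is your proposed resolution of that last point. You suggest that every morphism of $V_k$ has a representative in $E_k$, so that the descended $\ast_k\colon H_k\times H_k\to H_k$ already covers all of $\mathrm{Hom}_{V_k}$. This does not appear to hold, and the paper's own example $2m\mathbb{Z}/2\mathbb{Z}_{\mathbb{Z}/2\mathbb{Z}}$ at the end of Section~3 is a good test case: there $\mathrm{Hom}_{V_1}$ is the free product $\mathbb{Z}/2\mathbb{Z}*\mathbb{Z}/2\mathbb{Z}$ on the globular $-\mathbf{1}$ and the formal identity $i_{-1}$, and a word such as $(-\mathbf{1})\bullet i_{-1}\bullet(-\mathbf{1})$ has source and target the non-identity vertical morphism $-1$, whereas the only elements of $E_1$ with that source and target are evaluations built solely from copies of $i_{-1}$; relations 1, 4, 5 do not visibly collapse the former into the latter. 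So $H_k$ is in general a proper subcollection of $\mathrm{Hom}_{V_k}$, and $\ast_k$ as literally defined on $E_k$ does not by itself furnish a function on all morphism pairs of $V_k$. The paper's observation is equally loose here. What is actually used downstream (e.g.\ in Lemma 3.14) is only that $\ast_k$ is defined and compatible with $s_{k+1},t_{k+1}$ on the generating collection $H_k$; promoting this to a genuine bifunctor on all of $V_k$ would require a separate closure argument---showing that $\ast_\infty$ applied to a pair in $\mathrm{Hom}_{F_k}$ lands back in $\mathrm{Hom}_{F_k}$ modulo $R_\infty$---which neither you nor the paper supplies.
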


\begin{obs}\label{Observationfinal3}
Let $\mathcal{B}$ be a decorated bicategory. Let $m,k$ be positive integers such that $m$ is less than or equal to $k$. In that case $H_m$ is contained in $H_k$, the category $V_m$ is a subcategory of the category $V_k$, the functors $s_{m+1}$ and $t_{m+1}$ are restrictions to $V_m$ of functors $s_{k+1}$ and $t_{k+1}$, and the functor $\ast_m$ is the restriction to $V_m\times_{\mathcal{B}^*}V_m$ of the functor $\ast_k$. Moreover, the category $V_\infty$ is equal to the limit $\varinjlim V_k$ of the sequence $V_k$, the collection of morphisms $H_\infty$ of $V_\infty$ is equal to the union $\bigcup_{k=1}^\infty H_k$ of the sequence $H_k$, and the functors $s_\infty,t_\infty$ and $\ast_\infty$ are the limits of the sequences of functors $s_k,t_k$ and $\ast_k$ respectively.
\end{obs}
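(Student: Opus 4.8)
The plan is to deduce the statement from the already-established ``pre-quotient'' facts --- that $E_m\subseteq E_k$, that $F_m$ is a subcategory of $F_k$, that $d_m,c_m,s_{m+1},t_{m+1}$ and $\ast_m$ are the restrictions to level $m$ of $d_k,c_k,s_{k+1},t_{k+1}$ and $\ast_k$, that $E_\infty=\bigcup_kE_k$ and $F_\infty=\varinjlim_kF_k$ --- together with the single soft principle that quotienting by the restriction of a fixed equivalence relation commutes with directed unions. First I would record the interleaving $E_k\subseteq\mbox{Hom}_{F_k}\subseteq E_{k+1}$, which holds because $\mbox{Hom}_{F_k}$ is contained in $E_{k+1}$ and $E_k$ is contained in $\mbox{Hom}_{F_k}$ by construction; in particular $\bigcup_k\mbox{Hom}_{F_k}=\bigcup_kE_k=E_\infty$, so the categories $F_k$ form a chain of subcategories of $F_\infty$ whose union is $F_\infty$.

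Next I would use that the equivalence relation defining $V_k$ and $H_k$ is, by construction, the restriction of the single relation $R_\infty$ on $E_\infty$: explicitly $H_k=E_k/\bigl(R_\infty\cap(E_k\times E_k)\bigr)$ and $V_k=F_k/\bigl(R_\infty\cap(\mbox{Hom}_{F_k}\times\mbox{Hom}_{F_k})\bigr)$. For $\Phi,\Psi\in E_m$ with $m\le k$ one has $(\Phi,\Psi)\in R_\infty\cap(E_m\times E_m)$ if and only if $(\Phi,\Psi)\in R_\infty$ if and only if $(\Phi,\Psi)\in R_\infty\cap(E_k\times E_k)$; hence the canonical maps $H_m\to H_k\to H_\infty$ induced by $E_m\subseteq E_k\subseteq E_\infty$ are injective, and likewise the functors $V_m\to V_k\to V_\infty$ are injective on morphisms and identical on objects. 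We therefore identify $H_m$ with a subset of $H_k$ and $V_m$ with a subcategory of $V_k$. Since $s_{k+1},t_{k+1}$ and $\ast_k$ on $V_k$ are induced, along the quotient map, by the restrictions to level $k$ of $s_\infty,t_\infty$ and $\ast_\infty$ --- whose compatibility with $R_\infty$ is furnished by the preceding lemmas and observations --- and since a restriction of a restriction is a restriction, $s_{m+1},t_{m+1}$ are the restrictions to $V_m$ of $s_{k+1},t_{k+1}$ and $\ast_m$ is the restriction of $\ast_k$ to $V_m\times_{\mathcal{B}^*}V_m$.

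For the colimit half the one point to check is $R_\infty=\bigcup_k\bigl(R_\infty\cap(\mbox{Hom}_{F_k}\times\mbox{Hom}_{F_k})\bigr)$: any pair $(\Phi,\Psi)\in R_\infty$ has $\Phi,\Psi\in E_\infty=\bigcup_k\mbox{Hom}_{F_k}$, so both members lie in a common $\mbox{Hom}_{F_K}$ and $(\Phi,\Psi)$ already belongs to the restriction of $R_\infty$ to level $K$. Consequently the quotient map $E_\infty\to H_\infty$ carries $E_k$ onto $H_k$ with precisely the equivalence relation used to define $H_k$, so $H_\infty=\bigcup_kH_k$; composition in $V_\infty$ restricts on each $V_k$ to $\bullet_k$ because $\bullet_\infty$ restricts to $\bullet_k$ and both are compatible with $R_\infty$; and therefore $V_\infty$ equipped with the inclusions $V_k\hookrightarrow V_\infty$ is the union $\bigcup_kV_k$, i.e. the limit $\varinjlim_kV_k$ in \textbf{Cat}. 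Running this argument one level at a time identifies $s_\infty,t_\infty$ and the bifunctor $\ast_\infty$ as the functors induced on $\varinjlim_kV_k$ by the compatible families $s_{k+1},t_{k+1}$ and $\ast_k$, that is, as their limits.

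I do not anticipate a genuine obstacle: the proof is bookkeeping built on results already in hand. The only clause that needs any care is $R_\infty=\bigcup_k\bigl(R_\infty\cap(\mbox{Hom}_{F_k}\times\mbox{Hom}_{F_k})\bigr)$; one could alternatively prove it by verifying that each of the five generating relations of $R_\infty$ only relates elements lying in a common $\mbox{Hom}_{F_k}$ --- a routine consequence of the interleaving $E_k\subseteq\mbox{Hom}_{F_k}\subseteq E_{k+1}$ and of $\ast_\infty$ and $\bullet_\infty$ restricting level by level to $\ast_k$ and $\bullet_k$ --- but, as noted, it also drops out of $E_\infty=\bigcup_k\mbox{Hom}_{F_k}$ alone.
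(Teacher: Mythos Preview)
Your proposal is correct and is precisely the argument the paper has in mind: in the paper this statement is recorded as an \emph{observation} with no proof given, relying on the preceding Observation~3.3 (the pre-quotient inclusions $E_m\subseteq E_k$, $F_m\subseteq F_k$ and restriction of $d_m,c_m,s_{m+1},t_{m+1},\ast_m$) and Observation~3.11 (that $H_k$ and $V_k$ are by definition the quotients of $E_k$ and $F_k$ by the \emph{restriction} of the single relation $R_\infty$). Your write-up simply unpacks why quotienting by the restriction of a fixed relation to the terms of a chain commutes with the directed union, which is exactly the content left implicit by the paper.
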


\begin{notation}\label{Preliminarystructurenotation}
Let $\mathcal{B}$ be a decorated bicategory. The pair formed by the function associating the horizontal identity $i_a$ to every object $a$ of $\mathcal{B}$ and the function associating the formal horizontal identity $i_\alpha$ to every vertical morphism $\alpha$ in $\mathcal{B}$ defines a functor from the decoration $\mathcal{B}^*$ of $\mathcal{B}$ to the category $V_\infty$ associated to $\mathcal{B}$. We denote this functor by $i_\infty$. For every positive integer $k$ we denote the codomain restriction to the category $V_k$ of the functor $i_\infty$ by $i_k$. Thus defined, $i_k$ is a functor from the decoration $\mathcal{B}^*$ of $\mathcal{B}$ to the category $V_k$ associated to $\mathcal{B}$ for every positive integer $k$.  
\end{notation}

\begin{lemma}\label{Preliminarylemma}
Let $\mathcal{B}$ be a decorated bicategory. Let $k$ be a positive integer. The functors $s_{k+1},t_{k+1},i_k$, and $\ast_k$ satisfy the following two conditions:

\begin{enumerate}
\item The following two triangles commute
\begin{center}

\begin{tikzpicture}
\matrix(m)[matrix of math nodes, row sep=4em, column sep=4em,text height=1.5ex, text depth=0.25ex]
{\mathcal{B}^*&V_k&\mathcal{B}^*&V_k\\
&\mathcal{B}^*&&\mathcal{B}^*\\};
\path[->,font=\scriptsize]
(m-1-1) edge node[auto] {$i_k$} (m-1-2)
        edge node[left] {$id_{\mathcal{B}^*}$} (m-2-2)			
(m-1-2) edge node[auto] {$s_{k+1}$} (m-2-2)
(m-1-3) edge node[auto] {$i_k$} (m-1-4)
(m-1-4) edge node[auto]	{$t_{k+1}$} (m-2-4)			
(m-1-3) edge node[left] {$id_{\mathcal{B}^*}$} (m-2-4);
\end{tikzpicture}

\end{center}

\item The following two squares commute, where $q^k_1,q^k_2$ denote the left and right projection functors from $V_k\times_{\mathcal{B}^*}V_k$ to $V_k$ respectively.

\begin{center}

\begin{tikzpicture}
\matrix(m)[matrix of math nodes, row sep=4em, column sep=4em,text height=1.5ex, text depth=0.25ex]
{V_k\times_{\mathcal{B}^*}V_k&V_k&V_k\times_{\mathcal{B}^*}V_k&V_k\\
V_k&\mathcal{B}^*&V_k&\mathcal{B}^*\\};
\path[->,font=\scriptsize]
(m-1-1) edge node[auto] {$\ast_k$}  (m-1-2)
        edge node[left] {$q^k_1$}   (m-2-1)			
(m-1-2) edge node[auto] {$s_{k+1}$} (m-2-2)
(m-2-1) edge node[below]{$s_{k+1}$} (m-2-2)
(m-1-3) edge node[auto] {$\ast_k$}  (m-1-4)
(m-1-4) edge node[auto]	{$t_{k+1}$} (m-2-4)			
(m-1-3) edge node[left] {$q^k_2$}   (m-2-3)
(m-2-3) edge node[below]{$s_{k+1}$} (m-2-4);
\end{tikzpicture}

\end{center}

\end{enumerate}
\end{lemma}

\begin{proof}
Let $\mathcal{B}$ be a decorated bicategory. Let $k$ be a positive integer. We wish to prove that conditions 1 and 2 above are satisfied.

We begin by proving that the functors $s_{k+1},t_{k+1}$, and $i_k$ satisfy condition 1 above. Let $\alpha$ be a vertical morphism in $\mathcal{B}$. The formal horizontal identity $i_\alpha$ associated to $\alpha$, that is, the image $i_k\alpha$ of $\alpha$ under the functor $i_k$, is a morphism in $V_1$. From this and from the fact that the sequences $s_{k+1}$ and $t_{k+1}$ satisfy the conditions of proposition 3.2 it follows that the commutativity of triangles in 1 is equivalent to the commutativity of the following triangles:

\begin{center}

\begin{tikzpicture}
\matrix(m)[matrix of math nodes, row sep=4.6em, column sep=5.1em,text height=1.5ex, text depth=0.25ex]
{\mathcal{B}^*&V_1&\mathcal{B}^*&V_1\\
&\mathcal{B}^*&&\mathcal{B}^*\\};
\path[->,font=\scriptsize]
(m-1-1) edge node[auto] {$i_1$} (m-1-2)
        edge node[left] {$id_{V_1}$} (m-2-2)			
(m-1-2) edge node[auto] {$s_{2}$} (m-2-2)
(m-1-3) edge node[auto] {$i_1$} (m-1-4)
(m-1-4) edge node[auto]	{$t_{2}$} (m-2-4)			
(m-1-3) edge node[left] {$id_{V_1}$} (m-2-4);
\end{tikzpicture}

\end{center}

\noindent which follows directly from the definition of the functions $d_1$ and $c_1$. We now prove that the functors $s_{k+1},t_{k+1}$, and $\ast_k$ satisfy condition 2 above. The commutativity of squares in 2 when evaluated on morphisms of $H_k\times_{\mbox{Hom}_{\mathcal{B}^*}}H_k$ follows from observation 3.4. The general commutativity of the squares in condition 2 follows from this and from the fact that all adges involved are functors. This concludes the proof of the lemma.
\end{proof}

\noindent The following corollary follows directly from the previous lemma by taking limits.

\begin{corollary}\label{Preliminarycorollary}
Let $\mathcal{B}$ be a decorated bicategory. The functors $s_\infty,t_\infty, i$, and $\ast_\infty$ satisfy the following two conditions:

\begin{enumerate}

\item The following two triangles commute:

\begin{center}

\begin{tikzpicture}
\matrix(m)[matrix of math nodes, row sep=4em, column sep=4em,text height=1.5ex, text depth=0.25ex]
{\mathcal{B}^*&V_\infty&\mathcal{B}^*&V_\infty\\
&\mathcal{B}^*&&\mathcal{B}^*\\};
\path[->,font=\scriptsize]
(m-1-1) edge node[auto] {$i$} (m-1-2)
        edge node[left] {$id_{\mathcal{B}^*}$} (m-2-2)			
(m-1-2) edge node[auto] {$s_\infty$} (m-2-2)
(m-1-3) edge node[auto] {$i$} (m-1-4)
(m-1-4) edge node[auto]	{$t_\infty$} (m-2-4)			
(m-1-3) edge node[left] {$id_{\mathcal{B}^*}$} (m-2-4);
\end{tikzpicture}

\end{center}

\item The following two squares commute, where $q^\infty_1,q^\infty_2$ denote the left and right projection functors from $V_\infty\times_{\mathcal{B}^*}V_\infty$ to $V_\infty$ respectively.

\begin{center}

\begin{tikzpicture}
\matrix(m)[matrix of math nodes, row sep=4em, column sep=4em,text height=1.5ex, text depth=0.25ex]
{V_\infty\times_{\mathcal{B}^*}V_\infty&V_\infty&V_\infty\times_{\mathcal{B}^*}V_\infty&V_\infty\\
V_\infty&\mathcal{B}^*&V_\infty&\mathcal{B}^*\\};
\path[->,font=\scriptsize]
(m-1-1) edge node[auto] {$\ast_\infty$}  (m-1-2)
        edge node[left] {$q^\infty_1$}   (m-2-1)			
(m-1-2) edge node[auto] {$s_\infty$} (m-2-2)
(m-2-1) edge node[below]{$s_\infty$} (m-2-2)
(m-1-3) edge node[auto] {$\ast_\infty$}  (m-1-4)
(m-1-4) edge node[auto]	{$t_\infty$} (m-2-4)			
(m-1-3) edge node[left] {$q^\infty_2$}   (m-2-3)
(m-2-3) edge node[below]{$s_{k+1}$} (m-2-4);
\end{tikzpicture}

\end{center}

\end{enumerate}
\end{corollary}

\

\noindent \textit{The main construction: The definition}

\

\noindent Let $\mathcal{B}$ be a decorated bicategory. We will denote by $Q_\mathcal{B}$ the pair formed by the decoration $\mathcal{B}^*$ of $\mathcal{B}$ and the category $V_\infty$ associated to $\mathcal{B}$. The following theorem says that we can endow the pair $Q_\mathcal{B}$ with the structure of a globularily generated double category.

\begin{thm}\label{MainTheorem} Let $\mathcal{B}$ be a decorated bicategory. The pair $Q_\mathcal{B}$ together with functors $s_\infty,t_\infty,i$, the functor $\ast_\infty$, and the collection of left and right identity transformations, and associator of $\mathcal{B}$, is a double category. Moreover, with this structure, the double cateogry $Q_\mathcal{B}$, is globularily generated.
\end{thm}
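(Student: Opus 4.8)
\noindent\emph{Strategy.} Almost every component of the internal‑category structure on $Q_{\mathcal B}$ has already been assembled in the results preceding this theorem, and the coherence data and coherence axioms are exactly what the defining relations of $R_\infty$ were introduced to supply. The plan is therefore to check the double‑category axioms one at a time, matching each with an earlier result or with a clause of the definition of $R_\infty$, and then to deduce global generation by an induction along the filtration $V_k$. First I would record that $(\mathcal B^*,V_\infty,s_\infty,t_\infty,i,\ast_\infty)$ is already a reflexive graph in \textbf{Cat} equipped with a horizontal composition bifunctor: $V_\infty$ is a category (the lemma asserting compatibility of $R_\infty$ with $d_\infty,c_\infty,\bullet_\infty$, together with the definition of $V_\infty$ as $F_\infty/R_\infty$); $s_\infty,t_\infty$ descend to functors $V_\infty\to\mathcal B^*$ and $\ast_\infty$ is a bifunctor $V_\infty\times_{\mathcal B^*}V_\infty\to V_\infty$ (the lemma that $s_\infty,t_\infty,\ast_\infty$ are compatible with $R_\infty$, and the ensuing observation, the interchange law needed for bifunctoriality of $\ast_\infty$ being precisely relation~1 in the definition of $R_\infty$); $i=i_\infty$ is a functor $\mathcal B^*\to V_\infty$; and $s_\infty i=\mathrm{id}_{V_\infty}=t_\infty i$ together with the source/target laws for $\ast_\infty$‑composites hold by the corollary to the previous lemma. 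On objects, $\ast_\infty$ is the horizontal composition of the underlying bicategory of $\mathcal B$. Hence it remains to verify (a) that $\lambda$, $\rho$, $A$ are globular natural isomorphisms between the appropriate endofunctors of $V_\infty$ and the identity, and (b) the triangle and pentagon coherence axioms.

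For (a): the components $\lambda_f,\rho_f,A_{f,g,h}$ are, by definition, the unitors and associator of the underlying bicategory of $\mathcal B$, hence globular $2$‑cells of $\mathcal B$; viewed in $V_\infty$ through $\mathbb G\subseteq E_1$ they have vertical identities as source and target — that is, they are globular $2$‑morphisms of $Q_{\mathcal B}$ — since $s_0$ and $t_0$ send a globular $2$‑cell of $\mathcal B$ to a vertical identity. Naturality of $\lambda$ and $\rho$ is precisely relation~4 in the definition of $R_\infty$ (the two displayed identifications are exactly the naturality squares of $\lambda$, resp. $\rho$, evaluated at an arbitrary $2$‑morphism $\Phi$), and naturality of $A$ is relation~5. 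Each of these cells is invertible in $\mathcal B$, with a globular inverse; by relation~2 the operation $\bullet_\infty$ restricted to globular $2$‑cells of $\mathcal B$ agrees with vertical composition in $\mathcal B$, and combining relations~2 and~4 one checks that the categorical identity of a horizontal morphism $f$ in $V_\infty$ is represented by the identity $2$‑cell $\mathrm{id}_f$ of $\mathcal B$; hence $\lambda_f,\rho_f,A_{f,g,h}$ are isomorphisms in $V_\infty$ with the expected inverses. For (b): the triangle and pentagon axioms for $Q_{\mathcal B}$ are equalities between composites, under $\bullet_\infty$ and $\ast_\infty$, of the cells $\lambda,\rho,A$ and of identity $2$‑morphisms, all of which are (images of) globular $2$‑cells of $\mathcal B$; since by relations~2 and~3 the operations $\bullet_\infty$ and $\ast_\infty$ restrict on globular $2$‑cells of $\mathcal B$ to the vertical and horizontal composition of $\mathcal B$, each such equality holds in $V_\infty$ if and only if it holds in $\mathcal B$, which it does because $\mathcal B$ is a bicategory. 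This completes the verification that $Q_{\mathcal B}$ is a double category.

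For global generation I would mirror the argument of [7] and induct along the filtration. Let $D$ be the complete sub‑double category of $Q_{\mathcal B}$ generated by its globular $2$‑morphisms. Being a sub‑double category, $D$ is closed under the identity functor, so it contains every formal horizontal identity $i_\alpha=i(\alpha)$; it also contains every globular $2$‑cell of $\mathcal B$. Since these together generate $V_1$ under $\bullet_\infty$, and since every horizontal morphism of $\mathcal B$ is the domain of a globular identity $2$‑cell (so that $D_1$ contains all horizontal morphisms as objects), we obtain $V_1\subseteq D_1$; applying $s_\infty$, which is surjective on objects and on morphisms because $a=s_\infty(\mathrm{id}_a)$ and $\alpha=s_\infty(i_\alpha)$, this also forces $D_0=\mathcal B^*$. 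Assuming $V_{k-1}\subseteq D_1$, the collection $H_k$ consists of iterated $\ast_\infty$‑composites of morphisms of $V_{k-1}$, hence lies in $D_1$ because $D$ is closed under the horizontal composition bifunctor; and since $V_k$ is generated by $H_k$ under $\bullet_\infty$ and $D_1$ is a subcategory of the morphism category of $Q_{\mathcal B}$, we conclude $V_k\subseteq D_1$. Taking the union over $k$, and using that $V_\infty=\bigcup_{k\ge1}V_k$, gives that the morphism category of $Q_{\mathcal B}$ is contained in $D_1$, so $D=Q_{\mathcal B}$; thus $Q_{\mathcal B}$ is globularily generated.

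The main obstacle is not any single hard estimate but the disciplined bookkeeping of pairing each double‑category axiom with the clause of $R_\infty$ designed to enforce it; the one genuinely non‑formal point is the identification, used in part (a), of the categorical identity of an object $f$ of $V_\infty$ with the identity $2$‑cell $\mathrm{id}_f$ of $\mathcal B$, which is needed both so that $\lambda_f^{-1},\rho_f^{-1},A^{-1}$ are two‑sided inverses in $V_\infty$ and so that the coherence identities transported from $\mathcal B$ are the right ones; it follows from relations~2 and~4 applied to identity morphisms of $F_\infty$.
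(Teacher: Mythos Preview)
Your proposal is correct and follows essentially the same approach as the paper: both arguments invoke the preceding corollary for compatibility of $s_\infty,t_\infty,i,\ast_\infty$, read off naturality of $\lambda,\rho$ from relation~4 and of $A$ from relation~5, deduce the triangle and pentagon axioms from those of the underlying bicategory via relations~2 and~3, and establish globular generation by induction along the filtration $V_k$. Your write-up is in fact more careful than the paper's on two points the paper leaves implicit --- the identification of the categorical identity of $f$ in $V_\infty$ with the bicategorical $\mathrm{id}_f$, and the explicit verification that $\lambda,\rho,A$ are invertible in $V_\infty$ --- so there is nothing to correct.
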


\begin{proof}
Let $\mathcal{B}$ be a decorated bicategory. We wish to prove in this case that the pair $Q_\mathcal{B}$, together with the functors $s_\infty,t_\infty,i$, the functor $\ast_\infty$, and the collection of left and right identity transformations and associator of $\mathcal{B}$ is a globularily generated double category.

The formal horizontal identity functor $i$ and the horizontal composition functor $\ast_\infty$ are compatible with the functors $s_\infty$ and $t_\infty$ by corollary 3.15. The collections of left identity transformations and right identity transformations of $\mathcal{B}$ form a natural transformation from $\ast_\infty(is_\infty\times id_{V_\infty})$ to the identity endofunctor $id_{V_\infty}$ of $V_\infty$ and a natural transformation from $\ast_\infty(id_{V_\infty}\times it_\infty)$ to the identity endofunctor $id_{V_\infty}$ of $V_\infty$ respectively by the fact that morphisms in $V_\infty$ satisfy relation 4 in the definition of $R_\infty$. The collection of associators of $\mathcal{B}$ forms a natural transformation from the composition $\ast_\infty(\ast_\infty\times id_{V_\infty})$ to the composition $\ast_\infty(id_{V_\infty}\times \ast_\infty)$ by the fact that morphisms in $V_\infty$ satisfy relation 5 in the definition of $R_\infty$. The left and right identity and the associator relations for $Q_\mathcal{B}$ again follow from the fact that morphisms in $V_\infty$ satisfy relations 4 and 5 in the definition of $R_\infty$. The fact that the pair formed by the functor $i$ and functor $\ast$ satisfies Mc Lane's triangle and pentagon relations with respect to the left and right identity transformations and associator follows from the fact that the components of the left and right identity transformations and associator satisfy mcLane's axioms for the bicategory $\mathcal{B}$. This proves that $Q_\mathcal{B}$ with the structure described is a double category. A straightforward induction argument proves that for every positive integer $k$ every morphism of $V_k$ is globularily generated in $Q_\mathcal{B}$, from which it follows that the double category $Q_\mathcal{B}$ is globularily generated. This concludes the proof of the theorem.  
\end{proof}

\begin{definition}\label{MainDefinition}
Let $\mathcal{B}$ be a decorated bicategory. We call the globularily generated double category $Q_\mathcal{B}$ the free globularily generated double category associated to $\mathcal{B}$.
\end{definition}

\noindent Lemma \ref{Preliminarylemma} provides the free globularily generated double category $Q_\mathcal{B}$ associated to a decorated bicategory $\mathcal{B}$ with a filtration $\left\{V_k\right\}$ of its category of morphisms $Q_{\mathcal{B}_1}$. We call this filtration the \textbf{free vertical filtration} associated to $Q_\mathcal{B}$. We use this filtration to define numerical invariants for $Q_\mathcal{B}$. Given a square $\varphi$ in $Q_\mathcal{B}$ we say that $\varphi$ is of \textbf{free length} $k$, $\ell_{free}\varphi=k$ in symbols, if $\varphi$ is a morphism in $V_k$ and $\varphi$ is not a morphism in $V_{k-1}$. Further, we say that $Q_\mathcal{B}$ has free vertical lenght $k\in \mathbb{N}\cup\left\{\infty\right\}$, $\ell_{free} Q_\mathcal{B}$ in symbols, if $k$ is the supremum of all free vertical lengths of squares in $Q_\mathcal{B}$. The free vertical filtration $\left\{V_k\right\}$ of $Q_\mathcal{B}$ might differ from the vertical filtration $\left\{V^{Q_\mathcal{B}}_k\right\}$ associated to $Q_\mathcal{B}$ as a globularily generated double category in \cite{yo1}. The free length $\ell_{free}\varphi$ of a square $\varphi$ in $Q_\mathcal{B}$ might differ from the length $\ell\varphi$ of $\varphi$ and correspondingly the free length $\ell_{free}Q_\mathcal{B}$ of $Q_\mathcal{B}$ might differ from the length $\ell Q_\mathcal{B}$ of $Q_\mathcal{B}$ as a globularily generated double category. In section \ref{s3} and section \ref{s4} we study situations in which the free vertical filtration and the usual filtration of a free globularily generated double category coincide.

Using arguments analogous as those employed in the proof of \cite[lemma 4.2]{yo1} it is easily proven that every free length 1 square $\varphi$ in the free globularily generated double category associated to a decorated bicategory $\mathcal{B}$ admitting a pictorial representation as:

\begin{center}

\begin{tikzpicture}
\matrix(m)[matrix of math nodes, row sep=4em, column sep=4em,text height=1.5ex, text depth=0.25ex]
{a&a\\
b&b\\};
\path[->,font=\scriptsize]
(m-1-1) edge node [above]{$\alpha$} (m-1-2)
        edge node [left]{$f$} (m-2-1)
(m-2-1) edge node[below]{$\beta$} (m-2-2)			
(m-1-2) edge node[right]{$f$} (m-2-2)
(m-1-1) edge [white] node[black][fill=white]{$\varphi$} (m-2-2);
\end{tikzpicture}
\end{center}

\noindent admits a factorization as a vertical composition of the form:

\[\psi_{k}\bullet_{\infty}i_{f_k}\bullet_{\infty}\psi_{k-1}\dots\psi_1\bullet_\infty i_{f_1}\bullet_\infty\psi_0\]

\noindent where $f_i:a_{i-1}\to a_i$ is a morphisms in $\mathcal{B}^*$ for every $1\leq i\leq k$, $\psi_i$ is a globular square, in $\mathcal{B}$ of the form:

\begin{center}

\begin{tikzpicture}
\matrix(m)[matrix of math nodes, row sep=4em, column sep=4em,text height=1.5ex, text depth=0.25ex]
{a_{i-1}&a_{i-1}\\
a_i&a_i\\};
\path[->,font=\scriptsize]
(m-1-1) edge [red] node {} (m-1-2)
        edge[blue] node {} (m-2-1)
(m-2-1) edge [red]node{} (m-2-2)			
(m-1-2) edge [blue] node {} (m-2-2)
(m-1-1) edge [white] node[black][fill=white]{$\psi_i$} (m-2-2);
\end{tikzpicture}

\end{center}

\noindent for every $1\leq i\leq k-1$, and where $\psi_0,\psi_k$ are globular squares of the form:

\begin{center}

\begin{tikzpicture}
\matrix(m)[matrix of math nodes, row sep=4em, column sep=4em,text height=1.5ex, text depth=0.25ex]
{a&a&a&a\\
a&a&a&a\\};
\path[->,font=\scriptsize]
(m-1-1) edge node [above]{$\alpha$} (m-1-2)
        edge[blue] node {} (m-2-1)
(m-2-1) edge [red]node{} (m-2-2)			
(m-1-2) edge [blue] node {} (m-2-2)
(m-1-1) edge [white] node[black][fill=white]{$\psi_0$} (m-2-2)

(m-1-3) edge[red] node {} (m-1-4)
        edge[blue] node {} (m-2-3)
(m-2-3) edge node[below]{$\beta$} (m-2-4)			
(m-1-4) edge [blue] node {} (m-2-4)
(m-1-3) edge [white] node[black][fill=white]{$\psi_1$} (m-2-4);
\end{tikzpicture}

\end{center}

\noindent We will make strong use of this fact in the rest of the paper.

Let $k$ be a field. We will understand for a $k$-linear decorated bicategory a decorated bicategory $\mathcal{B}$ such that both the underlying bicategory and the decoration of $\mathcal{B}$ are endowed with $k$-linear structures, where we understand for a $k$-linear structure on a bicategory $\mathcal{B}$ a structure of $k$-vector space for the set of 2-cells of the form:

\begin{center}

\begin{tikzpicture}
\matrix(m)[matrix of math nodes, row sep=4em, column sep=4em,text height=1.5ex, text depth=0.25ex]
{a&b\\};
\path[->,font=\scriptsize]
(m-1-1) edge [bend left=45] node [above]{$\alpha$}(m-1-2)
        edge [bend right=45] node[below]{$\beta$}(m-1-2)
        edge [white]node[black][fill=white]{$\varphi$}(m-1-2);
\end{tikzpicture}
\end{center}

\noindent for any pair of 1-cells $\alpha,\beta$ in $\mathcal{B}$ fitting in a diagram as above, in such a way that all the corresponding structure and coherence data is $k$-linear. When the decorated bicategory $\mathcal{B}$ is endowed with a linear structure, the free globularily double category construction can be modified, in the obvious way, such that the resulting double category, which we denote $Q^k_\mathcal{B}$, is endowed with the structure of a category internal to $k$-linear categories. We study this modification of the globularily generated double category construction in the context of categorical aspects of the representation theory of von Neumann algebras section \ref{s6}.

Finally, it is natural to expect relations between the free globularily generated double category construction and the free double category construction of Dawson and Par\'e \cite{DawsonPareFree}. Let $G$ be a reflexive double graph. Write $\mathcal{B}_G$ for the decorated horizontalization $H^*F(D)$ of the free double category generated by $D$. From the way the globlarily generated double category was constructed it is easily seen that the free double category $F(D)$ generated by $D$ and $Q_{\mathcal{B}_D}$ are related through the equation $Q_{\mathcal{B}_D}=\gamma F(D)$.

\section{Free globularily generated internalizations}\label{s3}

\noindent In this section we study situations in which the free globularily generated double category construction provides solutions to problem \ref{prob}. The following example shows that the free globularily generated double category construction not always provides solutions to this problem. Given a monoid $M$ we write $\Omega M$ for the delooping category of $M$, i.e. $\Omega M$ is the category with a single object $\ast$ whose monoid of endomorphisms $Emd_{\Omega M}(\ast)$ is $M$. Given a monoidal category $D$ we write $2D$ for the delooping bicategory of $D$, i.e. $\Omega D$ is the single object bicategory whose monoidal category of endomorphisms is $D$. Observe that given a monoid $M$ the delooping category $\Omega M$ of $M$ admits the structure of strict monoidal category if and only if $M$ is commutative by the Eckman-Hilton argument \cite{EckmanHilton}.

\begin{ex}\label{groupdecorationsnotalwaysinternalizations}
Let $\mathcal{B}$ be the decorated bicategory whose underlying bicategory is the single 0-cell and single 1-cell 2-category $2\Omega\mathbb{Z}_2$ and whose decoration $\mathcal{B}^*$ is the delooping category $\Omega\mathbb{Z}_2$. The free globularily generated double category $Q_\mathcal{B}$ associated to $\mathcal{B}$ does not provide solutions to problem \ref{prob} for $\mathcal{B}$. To see this consider the square:

\begin{center}

\begin{tikzpicture}
\matrix(m)[matrix of math nodes, row sep=4em, column sep=4em,text height=1.5ex, text depth=0.25ex]
{\ast&\ast\\
\ast&\ast\\
\ast&\ast\\
\ast&\ast\\};
\path[->,font=\scriptsize]
(m-1-1) edge [red] node {} (m-1-2)
        edge node [left]{$-1$} (m-2-1)
(m-1-2) edge node[right]{$-1$} (m-2-2)			
(m-2-1) edge [red] node {} (m-2-2)
(m-1-1) edge [white] node [black][fill=white]{$i_{-1}$}(m-2-2)

(m-2-1) edge [blue] node{} (m-3-1)
(m-2-2) edge [blue]node{} (m-3-2)			
(m-3-1) edge [red]node {} (m-3-2)
(m-2-1) edge [white]node[black][fill=white]{$-1$}(m-3-2)

(m-3-1) edge node[left]{$-1$} (m-4-1)
(m-3-2) edge node[right]{$-1$} (m-4-2)			
(m-4-1) edge [red]node {} (m-4-2)
(m-3-1) edge [white]node[black][fill=white]{$i_{-1}$}(m-4-2);
\end{tikzpicture}

\end{center}

\noindent in $Q_\mathcal{B}$, where we write $\mathbb{Z}_2$ multiplicatively, i.e. $\mathbb{Z}_2=\left\{\pm 1\right\}$. We denote this square by $\varphi$. Thus defined $\varphi$ satisfies the equations $t_\infty\varphi=s_\infty\varphi=(-1)^2=1$ and is thus globular in $Q_\mathcal{B}$. The only globular squares of $\mathcal{B}$ are the squares:

\begin{center}

\begin{tikzpicture}
\matrix(m)[matrix of math nodes, row sep=4em, column sep=4em,text height=1.5ex, text depth=0.25ex]
{\ast&\ast&\ast&\ast\\
\ast&\ast&\ast&\ast\\};
\path[->,font=\scriptsize]
(m-1-1) edge [red] node {} (m-1-2)
        edge [blue]node{} (m-2-1)
(m-1-2) edge [blue]node{} (m-2-2)			
(m-2-1) edge [red] node {} (m-2-2)
(m-1-1) edge [white] node [black][fill=white]{$i_1$}(m-2-2)

(m-1-3) edge [red] node{} (m-1-4)
(m-1-3) edge [blue]node{} (m-2-3)			
(m-1-4) edge [blue]node {} (m-2-4)
(m-2-3) edge [red]node {}(m-2-4)
(m-1-3) edge [white]node[black][fill=white]{$-1$}(m-2-4);
\end{tikzpicture}

\end{center}

\noindent none of which is equal to $\varphi$. We conclude that $\varphi$ is a globular square in $H^*Q_\mathcal{B}$ not contained in $\mathcal{B}$ and thus that $Q_\mathcal{B}$ does not provide a solution to problem \ref{prob} for $\mathcal{B}$.
\end{ex}

\noindent We now proivde conditions under which the free globularily generated double category $Q_\mathcal{B}$ associated to a decorated bicategory $\mathcal{B}$ does provide a solution to problem \ref{prob}. We say that a category $\mathcal{B}^*$ is reduced when the underlying groupoid of $\mathcal{B}^*$ is discrete, i.e. we say that $\mathcal{B}^*$ is reduced when the only isomorphisms of $\mathcal{B}$ are identities. Examples of reduced categories are delooping categories $\Omega M$ where $M$ is a reduced monoid, e.g. $M$ is any submonoid of $\mathbb{N}$, categories associated to partially ordered sets, e.g. Open$(X)$ for a topological space $X$, and path categories associated to graphs. The following proposition says that the free globularily generated double category associated to a decorated bicategory with reduced decoration provides solutions to problem \ref{prob}.

\begin{prop}\label{reducedprop}
Let $\mathcal{B}$ be a decorated bicategory. If $\mathcal{B}^*$ is reduced then the equation $H^*Q_\mathcal{B}=\mathcal{B}$ holds.
\end{prop}

\begin{proof}
Let $\mathcal{B}$ be a decorated bicategory. Assume that $\mathcal{B}^*$ is reduced. We wish to prove that in this case the equation $H^*Q_\mathcal{B}=\mathcal{B}$ holds.

We proceed by induction on $k$ to prove that every globular square in $V_k$ is a globular square in $\mathcal{B}$. We begin by proving the statement for $k=1$. Let $\varphi$ be a globular square in $V_1$. By \cite{yo1} if $\varphi$ is not a horizontal endomorphism of $Q_\mathcal{B}$ then $\varphi$ is a globular square in $\mathcal{B}$. We thus assume that $\varphi$ is a horizontal endomorphism in $Q_\mathcal{B}$. Represent $\varphi$ pictorially as:

\begin{center}

\begin{tikzpicture}
\matrix(m)[matrix of math nodes, row sep=4em, column sep=4em,text height=1.5ex, text depth=0.25ex]
{a&a\\
a&a\\};
\path[->,font=\scriptsize]
(m-1-1) edge node [above]{$\alpha$} (m-1-2)
        edge[blue] node {} (m-2-1)
(m-2-1) edge node[below]{$\beta$} (m-2-2)			
(m-1-2) edge [blue] node {} (m-2-2)
(m-1-1) edge [white] node[black][fill=white]{$\varphi$} (m-2-2);
\end{tikzpicture}

\end{center}

\noindent In that case $\varphi$ can be written as a vertical composition, in $Q_\mathcal{B}$ of the form:

\[\psi_{k}\bullet_{\infty}i_{f_k}\bullet_{\infty}\psi_{k-1}\dots\psi_1\bullet_\infty i_{f_1}\bullet_\infty\psi_0\]

\noindent where $f_i:a_{i-1}\to a_i$ is a morphisms in $\mathcal{B}^*$ for every $1\leq i\leq k$, $\psi_i$ is a globular square, in $\mathcal{B}$ of the form:

\begin{center}

\begin{tikzpicture}
\matrix(m)[matrix of math nodes, row sep=4em, column sep=4em,text height=1.5ex, text depth=0.25ex]
{a_{i-1}&a_{i-1}\\
a_i&a_i\\};
\path[->,font=\scriptsize]
(m-1-1) edge [red] node {} (m-1-2)
        edge[blue] node {} (m-2-1)
(m-2-1) edge [red]node{} (m-2-2)			
(m-1-2) edge [blue] node {} (m-2-2)
(m-1-1) edge [white] node[black][fill=white]{$\psi_i$} (m-2-2);
\end{tikzpicture}

\end{center}

\noindent for every $1\leq i\leq k-1$, and where $\psi_0,\psi_k$ are globular squares of the form:

\begin{center}

\begin{tikzpicture}
\matrix(m)[matrix of math nodes, row sep=4em, column sep=4em,text height=1.5ex, text depth=0.25ex]
{a&a&a&a\\
a&a&a&a\\};
\path[->,font=\scriptsize]
(m-1-1) edge node [above]{$\alpha$} (m-1-2)
        edge[blue] node {} (m-2-1)
(m-2-1) edge [red]node{} (m-2-2)			
(m-1-2) edge [blue] node {} (m-2-2)
(m-1-1) edge [white] node[black][fill=white]{$\psi_0$} (m-2-2)

(m-1-3) edge[red] node {} (m-1-4)
        edge[blue] node {} (m-2-3)
(m-2-3) edge node[below]{$\beta$} (m-2-4)			
(m-1-4) edge [blue] node {} (m-2-4)
(m-1-3) edge [white] node[black][fill=white]{$\psi_1$} (m-2-4);
\end{tikzpicture}

\end{center}

\noindent From the fact that $\varphi$ is globular it follows that the composition $f_k\dots f_1$ is equal to $id_a$. By the fact that $\mathcal{B}^*$ is reduced it follows that $a_i=a$ and $f_i=id_a$ for every $1\leq i\leq k$. We conclude that $\varphi$ is a vertical composition, in $Q_\mathcal{B}$ of globular squares of $\mathcal{B}$ and thus is a globular square in $\mathcal{B}$.

Let $k>1$. Suppose that the result is true for all positive integers $m$ such that $m<k$, i.e. suppose that every globular square in $V_m$ is a globular square in $\mathcal{B}$ for every $m<k$. Let $\varphi$ be a globular square in $V_k$. We prove that $\varphi$ is a globular square in $\mathcal{B}$. Assume first that $\varphi\in H_k$. In that case $\varphi$ admits a decomposition as $\psi_n\ast_\infty\dots\ast_\infty\psi_1$ where $\psi_i$ is a square in $V_{k-1}$ for every $i$. The horizontal composition of non-globular squares is never globular, thus in the above case $\psi_i$ is globular for every $i$. Bu the induction hypothesis $\varphi$ is in this case horizontal composition of globular squares in $\mathcal{B}$ and is thus a globular square in $\mathcal{B}$. Now suppose that $\varphi$ is a general square of $V_k$. In that case $\varphi$ admits a decomposition as vertical composition $\psi_n\bullet_\infty\dots\bullet_\infty\psi_1$ where $\varphi_i$ is a globular square in $H_k$ for every $i$. By the above argument every $\psi_i$ is a globular square in $\mathcal{B}$ and thus $\varphi$ is a globular square in $\mathcal{B}$. This concludes the proof of the proposition. 

\end{proof}

\noindent In the cases in which the free globularily generated double category $Q_\mathcal{B}$ associated to a decorated bicategory $\mathcal{B}$ is not an internalization of $\mathcal{B}$ we can always associate to $\mathcal{B}$ a larger decorated bicategory for which the free globularily generated double category construction does provide solutions to problem \ref{prob}. To see we first prove the following proposition.

\begin{prop}\label{freeidempotent}
Let $\mathcal{B}$ be a decorated bicategory. In that case the equation $Q_{H^*Q_\mathcal{B}}=Q_\mathcal{B}$ holds. 
\end{prop}

\begin{proof}
Let $\mathcal{B}$ be a decorated bicategory. We wish to prove that the equation $Q_{H^*Q_\mathcal{B}}=Q_\mathcal{B}$ holds.

The categories of objects of $Q_\mathcal{B}$ and $Q_{H^*Q_\mathcal{B}}$ are both equal to $\mathcal{B}^*$. The collections of horizontal morphisms of $Q_\mathcal{B}$ and $Q_{H^*Q_\mathcal{B}}$ are both equal to $\mathcal{B}_1$. The collection of squares of $Q_{\mathcal{B}}$ is clearly contained in $Q_{H^*Q_\mathcal{B}}$. To prove the proposition we thus need to prove that every square of $Q_{H^*Q_\mathcal{B}}$ is a square in $Q_\mathcal{B}$. For every positive integer $k$ we will write $\tilde{V}_k$ and $\tilde{H}_k$ for the category $V_k$ associated to $H^*Q_\mathcal{B}$ and for the set $H_k$ associated to $H^*Q_\mathcal{B}$ in lemma \ref{Preliminarylemma}. We prove, by induction on $k$, that every square in $\tilde{V}_k$ is a square in $Q_\mathcal{B}$. 

Let $\varphi$ be a square in $\tilde{V}_1$. In that case $\varphi$ admits a decomposition as:

\[\varphi=\psi_n \bullet_\infty i_{f_n}\bullet_\infty\psi_{n-1}\dots\psi_1\bullet_\infty i_{f_1}\bullet_\infty\psi_0\]

\noindent where $\psi_0,...,\psi_n$ and $f_1,...,f_n$ are as in the proof of proposition \ref{reducedprop}. Observe that each $i_{f_j}$ is a square in $V_1$ and each $\psi_j$ is a square in some $V_{k_j}$ and thus is a square of $Q_\mathcal{B}$ for every $i$. $\varphi$ is thus a square in $Q_\mathcal{B}$. 

Let $k$ be a positive integer such that $k>1$. Suppose that the result is true for every $m\leq k$. We prove that every square in $\tilde{V}_k$ is a square in $Q_\mathcal{B}$. Let $\varphi$ be a square in $\tilde{V}_k$. Suppose first that $\varphi\in \tilde{H}_k$. In that case $\varphi$ admits a decomposition as $\varphi=\psi_n\ast_\infty\dots\ast_\infty\psi_1$ where $\psi_1,...,\psi_n$ are squares in $\tilde{V}_{k-1}$ and thus are squares in $Q_\mathcal{B}$. The square $\varphi$ is thus a square in $Q_\mathcal{B}$. Suppose now that $\varphi$ is a general square in $\tilde{V}_k$. In that case $\varphi$ admits a decomposition as $\varphi=\psi_n\bullet_\infty\dots\bullet_\infty\psi_1$ where $\psi_i$ is a square in $\tilde{H}_k$ and is thus a square in $Q_\mathcal{B}$. The square $\varphi$ is thus a square in $Q_\mathcal{B}$. This concludes the proof of the proposition.
\end{proof}

\noindent Proposition \ref{freeidempotent} says that the operation of 'taking the free globularily generated double category' is idempotent, i.e. stops at order 2. We have the following immediate corollary.

\begin{corollary}\label{saturationcorollary}
Let $\mathcal{B}$ be a decorated bicategory. In that case $Q_\mathcal{B}$ is an internalization of $H^*Q_\mathcal{B}$.
\end{corollary}

\noindent Given a decorated bicategory $\mathcal{B}$ we call the decorated bicategory $H^*Q_\mathcal{B}$ the \textbf{saturation} of $\mathcal{B}$. We say that a decorated bicategory $\mathcal{B}$ is \textbf{saturated} whenever $\mathcal{B}$ is equal to its saturation $H^*Q_\mathcal{B}$. While the free globularily generated double category $Q_\mathcal{B}$ might not always provide a solution to problem \ref{prob} for the decorated bicategory $\mathcal{B}$ provided as set of initial conditions, the free globularily generated double category $Q_{H^*Q_\mathcal{B}}$ always provides a solution to problem \ref{prob} for the saturation $H^*Q_\mathcal{B}$ of $\mathcal{B}$. We compute saturations of certain decorated bicategories in the following sections. Observe that if a decorated bicategory $\mathcal{B}$ is saturated then the vertical filtration and the free vertical filtration of $Q_\mathcal{B}$ coincide and thus the free vertical lenght and the usual vertical length of squares in $Q_\mathcal{B}$ and of $Q_\mathcal{B}$ itself coincide. Decorated bicategories with reduced decorations are saturated by proposition \ref{reducedprop}.

\section{Length}\label{s4}

\noindent In this section we apply the free globularily generated double category construction to provide examples of double categories of non-trivial length. All the examples of double categories considered in \cite{yo1}, i.e. trivial double categories, and double categories of bordisms, algebras and von Neumann algebras are proven to be of length 1. The following example proves that the concept of length of a double category is non-trivial by explicitly constructing a double category of length equal to 2.

\begin{ex} \label{exlength2}
Let $\mathcal{B}$ denote the following 2-category: $\mathcal{B}$ has three objects $a,b,c$ and only horizontal identity 1-cells. All 2-cells in $\mathcal{B}$ will be identities except for one vertical endomorphism cell of $i_b$. This cell, together with the identity 2-cell of $i_b$ will form the group $\mathbb{Z}_2$ under both horizontal and vertical composition. Pictorially $\mathcal{B}$ is represented by the diagram:

\begin{center}

\begin{tikzpicture}
\matrix(m)[matrix of math nodes, row sep=4em, column sep=4em,text height=1.5ex, text depth=0.25ex]
{a&a\\
b&b\\
c&c\\};
\path[->,font=\scriptsize]
(m-1-1) edge [red,bend left=45] node {}(m-1-2)
        edge [red,bend right=45] node{}(m-1-2)
        edge [white]node[black][fill=white]{$id$}(m-1-2)

(m-2-1) edge [red,bend left=45] node {}(m-2-2)
        edge [red,bend right=45] node{}(m-2-2)
        edge [white]node[black][fill=white]{$\mathbb{Z}_2$}(m-2-2)

(m-3-1) edge [red,bend left=45] node {}(m-3-2)
        edge [red,bend right=45] node{}(m-3-2)
        edge [white]node[black][fill=white]{$id$}(m-3-2) ;
\end{tikzpicture}
\end{center}

\noindent Now decorate $\mathcal{B}$ with the following category $\mathcal{B}^*$: $\mathcal{B}^*$ has non-identity morphism $\alpha:a\to b,\beta,\beta':b\to C$ and $\gamma:a\to c$ satisfying the relation $\beta\alpha=\gamma=\beta'\alpha$. We represent $\mathcal{B}^*$ pictorially as:

\begin{center}

\begin{tikzpicture}
\matrix(m)[matrix of math nodes, row sep=4em, column sep=4em,text height=1.5ex, text depth=0.25ex]
{a\\
b\\
c\\};
\path[->,font=\scriptsize]
(m-1-1) edge node[left]{$\alpha$} (m-2-1)
        edge[bend right=65]node [left]{$\gamma$}(m-3-1)

(m-2-1) edge [bend right=45]node[left]{$\beta$} (m-3-1)
        edge [bend left=45]node[right]{$\beta'$}(m-3-1);
\end{tikzpicture}
\end{center}

\noindent We claim that $\ell Q_\mathcal{B}=2$. Observe first that since $\mathcal{B}^*$ is reduced, it is enough to prove that $\ell_{free}Q_\mathcal{B}=2$. We exhibit a pair of horizontally composable squares $\varphi,\psi$ in $Q_\mathcal{B}$ of vertical length 1 such that $\varphi\ast_\infty\psi$ is not a morphism in $V_1$. Write $\varphi$ and $\psi$ for the squares pictorially represented as:

\begin{center}

\begin{tikzpicture}
\matrix(m)[matrix of math nodes, row sep=4em, column sep=4em,text height=1.5ex, text depth=0.25ex]
{a&a&a&a\\
b&b&b&b\\
b&b&b&b\\
c&c&c&c\\};
\path[->,font=\scriptsize]
(m-1-1) edge [red] node {} (m-1-2)
        edge node [left]{$\alpha$} (m-2-1)
(m-1-2) edge node[right]{$\alpha$} (m-2-2)			
(m-2-1) edge [red] node {} (m-2-2)
(m-1-1) edge [white] node [black][fill=white]{$i_\alpha$}(m-2-2)

(m-2-1) edge [blue] node{} (m-3-1)
(m-2-2) edge [blue]node{} (m-3-2)			
(m-3-1) edge [red]node {} (m-3-2)
(m-2-1) edge [white]node[black][fill=white]{$-1$}(m-3-2)

(m-3-1) edge node[left]{$\beta$} (m-4-1)
(m-3-2) edge node[right]{$\beta$} (m-4-2)			
(m-4-1) edge [red]node {} (m-4-2)
(m-3-1) edge [white]node[black][fill=white]{$i_\beta$}(m-4-2)

(m-1-3) edge [red] node {} (m-1-4)
        edge node [left]{$\alpha$} (m-2-3)
(m-1-4) edge node[right]{$\alpha$} (m-2-4)			
(m-2-3) edge [red] node {} (m-2-4)
(m-1-3) edge [white] node [black][fill=white]{$i_\alpha$}(m-2-4)

(m-2-3) edge [blue] node{} (m-3-3)
(m-2-4) edge [blue]node{} (m-3-4)			
(m-3-3) edge [red]node {} (m-3-4)
(m-2-3) edge [white]node[black][fill=white]{$-1$}(m-3-4)

(m-3-3) edge node[left]{$\beta'$} (m-4-3)
(m-3-4) edge node[right]{$\beta'$} (m-4-4)			
(m-4-3) edge [red]node {} (m-4-4)
(m-3-3) edge [white]node[black][fill=white]{$i_{\beta'}$}(m-4-4);
\end{tikzpicture}

\end{center}

\noindent Thus defined $\varphi,\psi$ satisfy the equation $t_\infty\varphi=s_\infty\varphi=\gamma$ and thus are horizontally composable in $Q_\mathcal{B}$. We prove that $\varphi\ast_\infty\psi$ is not a morphism in $V_1$. To do this we first observe that $s_\infty\varphi\ast_\infty\psi=t_\infty\varphi\ast_\infty\psi=\gamma$. The only squares in $V_1$ with source and target equal to $\gamma$ are $i_\gamma$ and the squares $\varphi$ and $\psi$. To see that $\varphi\ast_\infty\psi$ is not equal to any of these three squares in $Q_\mathcal{B}$ observe that while $\varphi\ast_\infty\varphi=i_\gamma$ and $\psi\ast_\infty\psi=i_\gamma$, $\varphi\ast_\infty\psi$ satisfies the relations $\varphi\ast_\infty(\varphi\ast_\infty\psi)=\psi$ and $(\varphi\ast_\infty\psi)\ast_\infty\psi=\varphi$. From this and from the obvious fact that $\varphi\ast_\infty\psi$ is not equal to $i_\gamma$ it follows that $\varphi\ast_\infty\psi$ does not have vertical length equal to 1. $\ell Q_\mathcal{B}$ thus satisfies the inequality $\ell Q_\mathcal{B}\geq 2$, but it is obvious from the definition of $\mathcal{B}$ and $\mathcal{B}^*$ that $\ell Q_\mathcal{B}\leq 2$. We conclude that $\ell Q_\mathcal{B}=2$.

\end{ex}

\noindent The above example shows that the concept of vertical length of a double category is not trivial. We explain how to extend the construction presented in example \ref{exlength2} to a sequence of saturated decorated bicategories $\mathcal{B}_k$ such that $\ell Q_\mathcal{B}=k$ for every $k$.

Let $k$ be a positive integer. We make the underlying bicategory of $\mathcal{B}_k$ to be the $k+2$ vertex/2-cell version of the 2-category employed in the construction of example \ref{exlength2}. The underlying 2-category of $\mathcal{B}_k$ is represented by a vertical sequence of $k+2$ diagrams of the form:

\begin{center}

\begin{tikzpicture}
\matrix(m)[matrix of math nodes, row sep=4em, column sep=4em,text height=1.5ex, text depth=0.25ex]
{\bullet&\bullet\\
\bullet&\bullet\\
       &       \\
\bullet&\bullet\\
\bullet&\bullet\\};
\path[->,font=\scriptsize]
(m-1-1) edge [red,bend left=45] node {}(m-1-2)
        edge [red,bend right=45] node{}(m-1-2)
        edge [white]node[black][fill=white]{$id$}(m-1-2)
        
 (m-2-1) edge [red,bend left=45] node {}(m-2-2)
        edge [red,bend right=45] node{}(m-2-2)
        edge [white]node[black][fill=white]{$\mathbb{Z}_2$}(m-2-2)

(m-2-1) edge [white] node [black][fill=white]{$\vdots$}(m-4-2)

(m-4-1) edge [red,bend left=45] node {}(m-4-2)
        edge [red,bend right=45] node{}(m-4-2)
        edge [white]node[black][fill=white]{$\mathbb{Z}_2$}(m-4-2) 
        
(m-5-1) edge [red,bend left=45] node {}(m-5-2)
        edge [red,bend right=45] node{}(m-5-2)
        edge [white]node[black][fill=white]{$id$}(m-5-2)        ;
\end{tikzpicture}
\end{center}

\noindent We define the decoration $\mathcal{B}^*_k$ of $\mathcal{B}_k$. We make $\mathcal{B}^*_k$ to be generated by the graph $G_k$, which we define inductively as follows: We make $G_1$ be the graph generated by the arrows $\alpha,\beta,\beta'$ defining the category $\mathcal{B}^*$ in example \ref{exlength2}. Let $k>1$. Assuming the graph $G_{k-1}$ has been defined, we make the graph $G_k$ to be the graph pictorially represented by the diagram:

\begin{center}

\begin{tikzpicture}
\matrix(m)[matrix of math nodes, row sep=4em, column sep=4em,text height=1.5ex, text depth=0.25ex]
{\bullet\\
\bullet\\
\bullet\\};
\path[->,font=\scriptsize]
(m-1-1) edge node[left]{$\alpha_k$} (m-2-1)

(m-2-1) edge [cyan,bend right=45]node[black][left]{$G_{k-1}$} (m-3-1)
        edge [bend left=45]node[right]{$\beta_k$}(m-3-1);
\end{tikzpicture}
\end{center}

\noindent where the light blue arrow represents the graph $G_{k-1}$. It easily proven that thus defined the graph $G_k$ has $k+2$ verteces and exactly $k+1$ paths of maximal length $k+1$. Let $\mathcal{B}_k^*$ be the category generated by $G_k$ by identifying the maximal paths in each of the $G_m$ for $m\leq k$. Thus defined $G_k$ has a unique maximal path, which we denote by $\gamma_k$. Observe that $\mathcal{B}_1^*$ is the category $\mathcal{B}^*$ of example \ref{exlength2}. Now, assume the existence of a square $\varphi_{k-1}$ in $\mathcal{B}_{k-1}$ of length $k-1$ having $\gamma_{k-1}$ as source and target. Write $\psi_k,\psi'_k$ to be the following two squares of $Q_{\mathcal{B}_k}$:

\begin{center}

\begin{tikzpicture}
\matrix(m)[matrix of math nodes, row sep=4em, column sep=4em,text height=1.5ex, text depth=0.25ex]
{\bullet&\bullet&\bullet&\bullet\\
\bullet&\bullet&\bullet&\bullet\\
\bullet&\bullet&\bullet&\bullet\\
\bullet&\bullet&\bullet&\bullet\\};
\path[->,font=\scriptsize]

(m-1-1) edge [red]node [above]{} (m-1-2)
        edge node [left]{$\alpha_k$} (m-2-1)
(m-2-1) edge[red] node{} (m-2-2)			
(m-1-2) edge node [right]{$\alpha_k$} (m-2-2)
(m-1-1) edge [white] node[black][fill=white]{$i_{\alpha_k}$} (m-2-2)

(m-2-1) edge [blue] node {} (m-3-1)
(m-3-1) edge [red]node[below]{} (m-3-2)			
(m-2-2) edge[blue] node {} (m-3-2)
(m-2-1) edge [white] node[black][fill=white]{$-1$} (m-3-2)

(m-3-1) edge  node [left]{$\gamma_{k-1}$} (m-4-1)
(m-3-2) edge node [right]{$\gamma_{k-1}$} (m-4-2)
(m-4-1) edge[red] node {}(m-4-2)
(m-3-1) edge [white] node[black][fill=white]{$\varphi_{k-1}$} (m-4-2)

(m-1-3) edge [red]node [above]{} (m-1-4)
(m-1-3) edge node [left]{$\alpha_k$}(m-2-3)
(m-2-3) edge [red]node[below]{} (m-2-4)			
(m-1-4) edge node [right]{$\alpha_k$} (m-2-4)
(m-1-3) edge [white] node[black][fill=white]{$i_{\alpha_k}$} (m-2-4)

(m-2-3) edge[blue] node {} (m-3-3)
(m-2-4) edge [blue] node {}(m-3-4)
(m-3-3) edge [red]node {}(m-3-4)
(m-2-3) edge [white] node[black][fill=white]{$-1$} (m-3-4)

(m-3-3) edge node [left]{$\beta_k$} (m-4-3)
(m-3-4) edge node [right]{$\beta_k$} (m-4-4)			
(m-4-3) edge [red] node {} (m-4-4)
(m-3-3) edge [white] node[black][fill=white]{$i_{\beta_k}$} (m-4-4);
\end{tikzpicture}
\end{center}

\noindent Thus defined $\psi_k,\psi'_k$ are of length $k-1$ and by arguments similar to those presented in example \ref{exlength2} the horizontal composition $\psi_k\ast_\infty\psi'_k$ is of length $k$. We write $\varphi_k$ for this square. The free globularily generated double category $Q_{\mathcal{B}_k}$ is thus of length $\geq k$ for every $k$. It is easily seen that $\ell Q_{\mathcal{B}_k}$ is in fact equal to $k$ for every $k$.

Finally, observe that if $\mathcal{B}_\infty$ is the limit $\varinjlim \mathcal{B}_k$, i.e. $\mathcal{B}_\infty$ is equal to the limit of diagram of 2-categories $\mathcal{B}_k$, decorated by the limit of the diagram of categories $\mathcal{B}^*_k$, then $\ell Q_{\mathcal{B}_\infty}=\infty$.

\section{Group decorations}\label{s5}

\noindent In this section we study free globularily generated double categories associated to monoidal categories decorated by groups. We prove that the free globularily generated double category associated to any such decorated bicategory has free lenght equal to 1. Moreover, we prove that in this case the free globularily generated double category construction specializes to the free product operation of groups. We use this to provide explicit descriptions for saturations of such decorated bicategories. We begin by proving the following proposition.

\begin{prop}\label{groupdecorations}
Let $G$ be a group. Let $D$ be a monoidal category. If we write $\mathcal{B}$ for the decorated bicategory $(\Omega G,2D)$ then $\ell_{free}Q_\mathcal{B}=1$.
\end{prop}

\begin{proof}
Let $G$ be a group. Let $D$ be a monoidal category. We wish to prove that the free globularily generated double category $Q_\mathcal{B}$ associated to $\mathcal{B}=(\Omega G,2D)$ is such that $\ell_{free}Q_\mathcal{B}=1$.

We prove that $V_1$ is closed under $\ast_\infty$. Let $\varphi,\psi$ be squares in $V_1$ such that $t\varphi=s\psi$. If $\varphi,\psi$ are globular squares in $\mathcal{B}$ then $\varphi\ast_\infty\psi$ is a globular square in $\mathcal{B}$ and thus is a square in $V_1$. We thus assume that $\phi,\psi$ are not globular squares in $\mathcal{B}$. By results of \cite{yo1} $\varphi,\psi$ are horizontal endomorphisms. Represent $\varphi$ and $\psi$ pictorially as:

\begin{center}

\begin{tikzpicture}
\matrix(m)[matrix of math nodes, row sep=4em, column sep=4em,text height=1.5ex, text depth=0.25ex]
{\ast&\ast&\ast&\ast\\
\ast&\ast&\ast&\ast\\};
\path[->,font=\scriptsize]
(m-1-1) edge node [above]{$a$} (m-1-2)
        edge node [left]{$g$} (m-2-1)
(m-2-1) edge node[below]{$b$} (m-2-2)			
(m-1-2) edge node [right]{$g$} (m-2-2)
(m-1-1) edge [white] node[black][fill=white]{$\varphi$} (m-2-2)

(m-1-3) edge node [above]{$a'$} (m-1-4)
        edge node [left]{$g$} (m-2-3)
(m-2-3) edge node[below]{$b'$} (m-2-4)			
(m-1-4) edge node [right]{$g$} (m-2-4)
(m-1-3) edge [white] node[black][fill=white]{$\psi$} (m-2-4);
\end{tikzpicture}
\end{center}

\noindent where $a,a',b$ and $b'$ are objects in $D$ and $g\in G$. Write $\varphi$ and $\psi$ as vertical compositions of the form

\[\varphi=\varphi_{k+1}\bullet_\infty i_{g_k}\bullet_\infty\dots \bullet_\infty i_{g_1}\bullet_\infty\varphi_0\]

\noindent and

\[\psi=\psi_{s+1}\bullet_\infty i_{g'_s}\bullet_\infty\dots \bullet_\infty i_{g'_1}\bullet_\infty\psi_0\]

\noindent where $g_1,...,g_k,g'_1,...,g'_s$ are elements of $G$ such that $g_1\dots g_k=g=g'_1\dots g'_s$, where $\varphi_1,...,\varphi_k,\psi_1,...,\psi_k\in End_D(1)$, where $\varphi_0,\psi_0$ are morphisms, in $D$, from $a$ to 1 and from $a'$ to 1 respectively, and where $\varphi_{k+1},\psi_{k+1}$ are morphisms, in $D$, from 1 to $b$ and $b'$ respectively. We refer to these decompositions as equations 1 and 2. We make the above decompositions of $\varphi$ and $\psi$ horizontally compatible. Write $g_1$ as $g_1g^{-1}g=g_1g^{-1}(g_1'\dots g'_s)$. Using this write $i_{g_1}$ as $ i_{g_1g^{-1}}(i_{g'_s}\bullet_\infty\dots \bullet_\infty i_{g'_1})$. Inserting an identity endomorphism in between each $i_{g'_i}$ and $i_{g'_{i+1}}$ in the above decomposition we obtain a decomposition of $i_{g_1}\bullet_\infty\varphi_0$ as:

\[i_{g_1}\bullet_\infty\varphi_0= i_{g_1g^{-1}}\bullet_\infty (i_{g'_s}\bullet_\infty id_{i_{g'_s}}\bullet_\infty \dots \bullet_\infty id_{i_{g'_1}}\bullet_\infty i_{g'_1}\bullet id_{i_{g'_s}})\bullet_\infty \varphi_0\]

\noindent Write $\eta$ for the vertical composition

\[\varphi_{k+1}\bullet_\infty i_{g_k}\dots \bullet_\infty i_{g_2}\bullet_\infty\varphi_1\]

\noindent obtained from decomposition 1 by removing the first two terms from right to left. Substituting in decomposition 1 we obtain a decomposition of $\varphi$ as a vertical composition of the form:

\[(\eta\bullet_\infty i_{g_1g^{-1}})\bullet_\infty(i_{g'_s}\bullet_\infty id_{i_{g'_s}}\bullet_\infty \dots \bullet_\infty id_{i_{g'_1}}\bullet_\infty i_{g'_1}\bullet_\infty \varphi_0) \]

\noindent If we write $\nu_1,\nu_2$ for the expression on the first and second parenthesis above respectively we obtain a pictorial representation of $\varphi$ as:

\begin{center}

\begin{tikzpicture}
\matrix(m)[matrix of math nodes, row sep=4em, column sep=4em,text height=1.5ex, text depth=0.25ex]
{\ast&\ast\\
\ast&\ast\\
\ast&\ast\\};
\path[->,font=\scriptsize]
(m-1-1) edge node [above]{$a$} (m-1-2)
        edge node [left]{$g$} (m-2-1)
(m-2-1) edge[red] node{} (m-2-2)			
(m-1-2) edge node [right]{$g$} (m-2-2)
(m-1-1) edge [white] node[black][fill=white]{$\nu_1$} (m-2-2)

(m-2-1) edge [blue] node {} (m-3-1)
(m-3-1) edge node[below]{$b$} (m-3-2)			
(m-2-2) edge[blue] node {} (m-3-2)
(m-2-1) edge [white] node[black][fill=white]{$\nu_2$} (m-3-2);
\end{tikzpicture}
\end{center}

\noindent  Writing $\psi$ as $i_{id_{b'}}\bullet_\infty \psi$ we obtain a pictorial representation of $\psi$ as:

\begin{center}
\begin{tikzpicture}
\matrix(m)[matrix of math nodes, row sep=4em, column sep=4em,text height=1.5ex, text depth=0.25ex]
{\ast&\ast\\
\ast&\ast\\
\ast&\ast\\};
\path[->,font=\scriptsize]
(m-1-1) edge node [above]{$a'$} (m-1-2)
        edge node [left]{$g$} (m-2-1)
(m-2-1) edge node[below]{$b'$} (m-2-2)			
(m-1-2) edge node [right]{$g$} (m-2-2)
(m-1-1) edge [white] node[black][fill=white]{$\nu_1$} (m-2-2)

(m-2-1) edge [blue] node {} (m-3-1)
(m-3-1) edge node[below]{$b'$} (m-3-2)			
(m-2-2) edge[blue] node {} (m-3-2)
(m-2-1) edge [white] node[black][fill=white]{$id_{b'}$} (m-3-2);
\end{tikzpicture}
\end{center}

\noindent The horizontal composition $\varphi\ast_\infty\psi$ thus admits a pictorial representation as:

\begin{center}

\begin{tikzpicture}
\matrix(m)[matrix of math nodes, row sep=4em, column sep=4em,text height=1.5ex, text depth=0.25ex]
{\ast&\ast&\ast\\
\ast&\ast&\ast\\
\ast&\ast&\ast\\};
\path[->,font=\scriptsize]
(m-1-1) edge node [above]{$a$} (m-1-2)
        edge node [left]{$g$} (m-2-1)
(m-2-1) edge[red] node{} (m-2-2)			
(m-1-2) edge node [right]{$g$} (m-2-2)
(m-1-1) edge [white] node[black][fill=white]{$\nu_1$} (m-2-2)

(m-2-1) edge [blue] node {} (m-3-1)
(m-3-1) edge node[below]{$b$} (m-3-2)			
(m-2-2) edge[blue] node {} (m-3-2)
(m-2-1) edge [white] node[black][fill=white]{$\nu_2$} (m-3-2)

(m-1-2) edge node [above]{$a'$} (m-1-3)
(m-2-2) edge node[below]{$b'$} (m-2-3)			
(m-1-3) edge node [right]{$g$} (m-2-3)
(m-1-2) edge [white] node[black][fill=white]{$\nu_1$} (m-2-3)

(m-2-3) edge[blue] node {} (m-3-3)
(m-3-2) edge node [below]{$b'$}(m-3-3)
(m-2-2) edge [white] node[black][fill=white]{$id_{b'}$} (m-3-3);
\end{tikzpicture}
\end{center}

\noindent By the way $\nu_1$ was defined the horizontal composition of the two upper squares in the above diagram is equal to

\[\psi=\psi_{s+1}\bullet_\infty i_{g'_s}\bullet_\infty\dots \bullet_\infty i_{g'_1}\bullet_\infty(\varphi_0\ast_\infty\psi_0)\]

\noindent which is clear of free vertical length 1. The horizontal composition of the two bottom squares of the above diagram is clearly of free vertical length 1 and thus $\varphi\ast_\infty\psi$ is of vertical length 1. We conclude that $\ell_{free}Q_\mathcal{B}=1$ as desired.
\end{proof}

\noindent We use proposition \ref{groupdecorations} to relate the free globularily generated double category construction to the free product operation between groups. Moreover, we provide an explicit description of saturations of single object 2-categories decorated by deloopings of groups. This is the content of the following corollary.

\begin{corollary}

Let $G,A$ be groups. Suppose $A$ is abelian. Let $\mathcal{B}$ denote the decorated category $(\Omega G,2\Omega A)$. In that case $Q_\mathcal{B}$ has $\Omega G$ as category of objects and $\Omega(G\ast A)$ as category of squares. Moreover, the saturation $H^*Q_\mathcal{B}$ of $\mathcal{B}$ has the subroup of $G\ast A$ of words $a_kg_k\dots a_ig_1$ such that $g_k\dots g_1=1$ as groupoid of globular squares. 
\end{corollary}

\noindent The following example shows that the assumption of $G$ being a group is essential for proposition \ref{groupdecorations}. We show the existence of a single object bicategory $\mathcal{B}$ decorated by a reduced monoid such that $Q_\mathcal{B}$ has squares of length equal to 2.

\begin{ex}\label{inftyN}
Let $\mathcal{B}$ be the decorated bicategory $(\Omega(\mathbb{N}\setminus\left\{1\right\}),2\Omega\mathbb{Z}_2)$. From proposition \ref{reducedprop} and from the fact that $\mathbb{N}\setminus\left\{1\right\}$ is a reduced monoid it follows that $\mathcal{B}$ is saturated. We claim that $ Q_\mathcal{B}$ admits squares of length equal to 2. To see this let $\varphi$ be the following quare:

\begin{center}

\begin{tikzpicture}
\matrix(m)[matrix of math nodes, row sep=4em, column sep=4em,text height=1.5ex, text depth=0.25ex]
{\ast&\ast\\
\ast&\ast\\
\ast&\ast\\
\ast&\ast\\};
\path[->,font=\scriptsize]
(m-1-1) edge [red] node {} (m-1-2)
        edge node [left]{$3$} (m-2-1)
(m-1-2) edge node[right]{$3$} (m-2-2)			
(m-2-1) edge [red] node {} (m-2-2)
(m-1-1) edge [white] node [black][fill=white]{$i_3$}(m-2-2)

(m-2-1) edge [blue] node{} (m-3-1)
(m-2-2) edge [blue]node{} (m-3-2)			
(m-3-1) edge [red]node {} (m-3-2)
(m-2-1) edge [white]node[black][fill=white]{$-1$}(m-3-2)

(m-3-1) edge node[left]{$3$} (m-4-1)
(m-3-2) edge node[right]{$3$} (m-4-2)			
(m-4-1) edge [red]node {} (m-4-2)
(m-3-1) edge [white]node[black][fill=white]{$i_3$}(m-4-2);
\end{tikzpicture}

\end{center}

\noindent and let $\psi$ be the square:

\begin{center}

\begin{tikzpicture}
\matrix(m)[matrix of math nodes, row sep=3em, column sep=3em,text height=1.5ex, text depth=0.25ex]
{\ast&\ast\\
\ast&\ast\\
\ast&\ast\\
\ast&\ast\\
\ast&\ast\\
\ast&\ast\\};
\path[->,font=\scriptsize]
(m-1-1) edge [red] node {} (m-1-2)
        edge node [left]{$2$} (m-2-1)
(m-1-2) edge node[right]{$2$} (m-2-2)			
(m-2-1) edge [red] node {} (m-2-2)
(m-1-1) edge [white] node [black][fill=white]{$i_2$}(m-2-2)

(m-2-1) edge [blue] node{} (m-3-1)
(m-2-2) edge [blue]node{} (m-3-2)			
(m-3-1) edge [red]node {} (m-3-2)
(m-2-1) edge [white]node[black][fill=white]{$-1$}(m-3-2)

(m-3-1) edge node[left]{$2$} (m-4-1)
(m-3-2) edge node[right]{$2$} (m-4-2)			
(m-4-1) edge [red]node {} (m-4-2)
(m-3-1) edge [white]node[black][fill=white]{$i_2$}(m-4-2)

(m-4-1) edge[blue] node{} (m-5-1)
(m-4-2) edge[blue] node[right]{} (m-5-2)			
(m-5-1) edge [red]node {} (m-5-2)
(m-4-1) edge [white]node[black][fill=white]{$-1$}(m-5-2)

(m-5-1) edge node[left]{$2$} (m-6-1)
(m-5-2) edge node[right]{$2$} (m-6-2)			
(m-6-1) edge [red]node {} (m-6-2)
(m-5-1) edge [white]node[black][fill=white]{$i_2$}(m-6-2);
\end{tikzpicture}

\end{center}

\noindent Thus defined both $\varphi$ and $\psi$ are horizontal endomorphisms in $V_1$ such that $t_\infty\varphi=6=s_\infty\psi$. By the fact that both 2 and 3 are irreducible in $\mathbb{N}\setminus\left\{1\right\}$ it easily follows that $\varphi\ast_\infty\psi$ and any horizontal composition of $\varphi\ast_\infty\psi$ with itself are not  morphisms in $V_1$ and are thus of free length $\geq 2$. Clearly both $\varphi\ast_\infty\psi$ and and horizontal composition of $\varphi\ast_\infty\psi$ with itself are of free legth $\leq 2$ ans thus are of free length exactly 2. 
\end{ex}

\noindent Observe that the arguments of subdividing squares of free length 1 employed in the proof of proposition \ref{groupdecorations} can easily be modified to to prove that the free globularily generated double category associated to any monoidal category decorated by $\Omega\mathbb{N}$ has vertical length 1. Moreover, observe that in the case in which a decorated bicategory $\mathcal{B}$ is of the form $(\Omega M,A)$ for monoids/algebras $M,A$ where $A$ is commutative, then the first term of the free vertical filtration $V_1$ of $Q_\mathcal{B}$ is equal to the delooping $\Omega(M\ast A)$ of $M\ast A$.

\section{von Neumann algebras}\label{s6}

\noindent In this section we study applications of the free globularily generated double category construction to the problem of existence of functorial extensions of the Haagerup standard form construction and the Connes fusion operation, see \cite{Bartels1}. We prove that the bicategory of factors, Hilbert bimodules, and intertwining operators, decorated by not-necessarily finite index inclusions is saturated. This provides extensions of the Haagerup standard form construction and the Connes fusion operation, on the category of factors and not-necessarily finite index inclusions and a certain linear category properly containing the category of Hilbert spaces and bounded operators. These functors are compatible in the sense that they form the structure data of a category internal to linear categories internalizing the decorated bicategory of factors. We apply the saturation process introduced in section \ref{s3} to the problem of extending the Haagerup standard form construction and the Connes fusion operation, to functors on a category of general (not-necessarily factors) von Neumann algebras and general (not-necessarily finite index) von Neumann algebra morphisms. 

Our construction is as follows: We write \textbf{Mod}$^{fact}$ for the bicategory whose 2-cells are of the form:

\begin{center}

\begin{tikzpicture}
\matrix(m)[matrix of math nodes, row sep=4em, column sep=4em,text height=1.5ex, text depth=0.25ex]
{A&B\\};
\path[->,font=\scriptsize]
(m-1-1) edge [bend left=45] node [above]{$H$}(m-1-2)
        edge [bend right=45] node[below]{$K$}(m-1-2)
        edge [white]node[black][fill=white]{$\varphi$}(m-1-2);
\end{tikzpicture}
\end{center}

\noindent where $A,B$ are factors, $H,K$ are $A$-$B$ left-right Hilbert bimodules over $A,B$ and where $\varphi$ is a bounded intertwiner from $H$ to $K$. The horizontal identity cells in \textbf{Mod}$^{fact}$ are of the form:

\begin{center}

\begin{tikzpicture}
\matrix(m)[matrix of math nodes, row sep=4em, column sep=4em,text height=1.5ex, text depth=0.25ex]
{A&A\\};
\path[->,font=\scriptsize]
(m-1-1) edge [red,bend left=45] node[black] [above]{$L^2(A)$}(m-1-2)
        edge [red,bend right=45] node[black][below]{$L^2(A)$}(m-1-2)
        edge [white]node[black][fill=white]{$id_{L^2(A)}$}(m-1-2);
\end{tikzpicture}
\end{center}

\noindent where $A$ is a factor and where $L^2(A)$ denotes the Haagerup standard form of $A$, see \cite{Haagerup,Bartels1}. Given two horizontally compatible 2-cells in \textbf{Mod}$^{fact}$ of the form:

\begin{center}

\begin{tikzpicture}
\matrix(m)[matrix of math nodes, row sep=4em, column sep=4em,text height=1.5ex, text depth=0.25ex]
{A&B&C\\};
\path[->,font=\scriptsize]
(m-1-1) edge [bend left=45] node [above]{$H$}(m-1-2)
        edge [bend right=45] node[below]{$K$}(m-1-2)
        edge [white]node[black][fill=white]{$\varphi$}(m-1-2)
        
  (m-1-2) edge [bend left=45] node [above]{$H'$}(m-1-3)
        edge [bend right=45] node[below]{$K'$}(m-1-3)
        edge [white]node[black][fill=white]{$\varphi'$}(m-1-3)      ;
\end{tikzpicture}
\end{center}

\noindent the horizontal composition of $\varphi$ and $\varphi'$ in \textbf{Mod}$^{fact}$ is the 2-cell:

\begin{center}

\begin{tikzpicture}
\matrix(m)[matrix of math nodes, row sep=5em, column sep=4em,text height=1.5ex, text depth=0.25ex]
{A&C\\};
\path[->,font=\scriptsize]
(m-1-1) edge [bend left=45] node[black] [above]{$H\boxtimes_B H'$}(m-1-2)
        edge [bend right=45] node[black][below]{$K\boxtimes_B K'$}(m-1-2)
        edge [white]node[black][fill=white]{$\varphi\boxtimes_B\varphi'$}(m-1-2);
\end{tikzpicture}
\end{center}

\noindent where $H\boxtimes_BH',K\boxtimes_BK'$ and $\varphi\boxtimes_B\varphi'$ denote the Connes fusion of $H$ and $H'$, of $K$ and $K'$ and of $\varphi$ and $\varphi'$ respectively. Thus defined \textbf{Mod}$^{fact}$ is linear ($C^*$ tensor in fact).  We write \textbf{vN}$^{fact}$ for the category whose objects are factors and whose morphisms are (possibly infinite index) von Neumann algebra morphisms. Thus defined \textbf{vN}$^{fact}$ is linear. The pair $(\mbox{\textbf{vN}}^{fact}, \mbox{\textbf{Mod}}^{fact})$ is thus a linear decorated bicategory. We write $W^*_{fact}$ for this decorated bicategory. We prove the following proposition.

\begin{prop}\label{HaagerupExtension}
The linear decorated bicategory $W^*_{fact}$ is saturated and moreover, the equation $\ell Q^{\mathbb{C}}_{W^*_{fact}}=1$ holds.
\end{prop}

\begin{proof}
We wish to prove that $W^*_{fact}$ is saturated and that it satisfies the equation $\ell Q^{\mathbb{C}}_{W^*_{fact}}=1$. We prove that every square in $V_1$ is a multiple of a square admitting a pictorial representation as:

\begin{center}

\begin{tikzpicture}
\matrix(m)[matrix of math nodes, row sep=4em, column sep=4em,text height=1.5ex, text depth=0.25ex]
{\bullet&\bullet\\
\bullet&\bullet\\
\bullet&\bullet\\
\bullet&\bullet\\};
\path[->,font=\scriptsize]
(m-1-1) edge node {} (m-1-2)
        edge [blue]node {} (m-2-1)
(m-1-2) edge [blue]node {} (m-2-2)			
(m-2-1) edge [red] node {} (m-2-2)
(m-1-1) edge [white] node [black][fill=white]{}(m-2-2)

(m-2-1) edge node {} (m-3-1)
(m-2-2) edge node{} (m-3-2)			
(m-3-1) edge [red]node {} (m-3-2)
(m-2-1) edge [white]node[black][fill=white]{}(m-3-2)

(m-3-1) edge [blue]node{} (m-4-1)
(m-3-2) edge [blue]node{} (m-4-2)			
(m-4-1) edge node {} (m-4-2)
(m-3-1) edge [white]node[black][fill=white]{}(m-4-2);
\end{tikzpicture}

\end{center}

\noindent Let $\varphi$ be a square in $Q^\mathbb{C}_{W^*_{fact}}$ of free length 1. Represent $\varphi$ pictorially as:

\begin{center}

\begin{tikzpicture}
\matrix(m)[matrix of math nodes, row sep=4em, column sep=4em,text height=1.5ex, text depth=0.25ex]
{A&A\\
B&B\\};
\path[->,font=\scriptsize]
(m-1-1) edge node [above]{$\alpha$} (m-1-2)
        edge node [left]{$f$} (m-2-1)
(m-2-1) edge node[below]{$\beta$} (m-2-2)			
(m-1-2) edge node[right]{$f$} (m-2-2)
(m-1-1) edge [white]node[black][fill=white]{$\varphi$}(m-2-2);
\end{tikzpicture}
\end{center}

\noindent Write $\varphi$ as a vertical composition of the form

\[\psi_{k+1}\bullet_\infty i_{f_k}\bullet_\infty\psi_{k-1}\bullet_\infty\dots\bullet_\infty\psi_1\bullet_{\infty}i_{f_1}\bullet_\infty\psi_0\]

\noindent where $f_i$ is morphism from a factor $A_{i-1}$ to a factor $A_i$, where $f$ admits a decomposition as $f=f_k\dots f_1$, and where $\psi_i$ is a square of the form:

\begin{center}

\begin{tikzpicture}
\matrix(m)[matrix of math nodes, row sep=4em, column sep=4em,text height=1.5ex, text depth=0.25ex]
{A_i&A_i\\
A_i&A_i\\};
\path[->,font=\scriptsize]
(m-1-1) edge [red]node {} (m-1-2)
        edge [blue]node{} (m-2-1)
(m-2-1) edge [red]node{} (m-2-2)			
(m-1-2) edge [blue]node{} (m-2-2)
(m-1-1) edge [white] node [black][fill=white] {$\psi_i$}(m-2-2);
\end{tikzpicture}
\end{center}

\noindent for every $1\leq i\leq k$ and where $\psi_0,\psi_{k+1}$ are squares of the form:

\begin{center}

\begin{tikzpicture}
\matrix(m)[matrix of math nodes, row sep=4em, column sep=4em,text height=1.5ex, text depth=0.25ex]
{A&A&B&B\\
A&A&B&B\\};
\path[->,font=\scriptsize]
(m-1-1) edge node [above]{$\alpha$} (m-1-2)
        edge [blue]node {} (m-2-1)
(m-2-1) edge [red]node{} (m-2-2)			
(m-1-2) edge [blue]node{} (m-2-2)
(m-1-1) edge [white] node[black][fill=white]{$\psi_0$} (m-2-2)

(m-1-3) edge [red]node {} (m-1-4)
        edge [blue]node {} (m-2-3)
(m-2-3) edge node[below]{$\beta$} (m-2-4)			
(m-1-4) edge [blue]node {} (m-2-4)
(m-1-3) edge [white] node[black][fill=white]{$\psi_1$} (m-2-4);
\end{tikzpicture}
\end{center}

\noindent Let $1\leq i\leq k$. From the fact that $A_i$ is a factor it follows that the algebra of endomorphisms $End_{W^*_{fact}}(L^2(A_i))$ of $A_i$ is 1-dimensional and thus is equal to $\mathbb{C}id_{L^2(A_i)}$. From this it follows there exists a $\lambda_i\in\mathbb{C}$ such that the square $\psi_i$ is equal to $\lambda_i$ times the square:

\begin{center}

\begin{tikzpicture}
\matrix(m)[matrix of math nodes, row sep=4em, column sep=4em,text height=1.5ex, text depth=0.25ex]
{A_i&A_i\\
A_i&A_i\\};
\path[->,font=\scriptsize]
(m-1-1) edge [red]node {} (m-1-2)
        edge [blue]node{} (m-2-1)
(m-2-1) edge [red]node{} (m-2-2)			
(m-1-2) edge [blue]node{} (m-2-2)
(m-1-1) edge [white] node [black][fill=white] {$id_{L^2(A_i)}$}(m-2-2);
\end{tikzpicture}
\end{center}

\noindent From this we conclude that $\varphi$ is equal to $\Pi_{i=1}^k\lambda_i$ times the square:

\begin{center}

\begin{tikzpicture}
\matrix(m)[matrix of math nodes, row sep=4em, column sep=4em,text height=1.5ex, text depth=0.25ex]
{A&A\\
A&A\\
B&B\\
B&B\\};
\path[->,font=\scriptsize]
(m-1-1) edge node [above]{$\alpha$} (m-1-2)
        edge [blue]node {} (m-2-1)
(m-1-2) edge [blue]node {} (m-2-2)			
(m-2-1) edge [red] node {} (m-2-2)
(m-1-1) edge [white] node [black][fill=white]{$\psi_0$}(m-2-2)

(m-2-1) edge node [left]{$f$} (m-3-1)
(m-2-2) edge node[right]{$f$} (m-3-2)			
(m-3-1) edge [red]node {} (m-3-2)
(m-2-1) edge [white]node[black][fill=white]{$i_f$}(m-3-2)

(m-3-1) edge [blue]node{} (m-4-1)
(m-3-2) edge [blue]node{} (m-4-2)			
(m-4-1) edge node [below]{$\beta$} (m-4-2)
(m-3-1) edge [white]node[black][fill=white]{$\psi_1$}(m-4-2);
\end{tikzpicture}

\end{center}

\noindent This proves our claim. Observe that a square of the form above is globular if and only if it is a square in $W^*_{free}$. This proves that $H^*Q^\mathbb{C}_{W^*_{free}}=W^*_{free}$. The equation $\ell Q^\mathbb{C}_{W^*_{free}}=1$ follows from the fact that the horizontal composition of two squares admitting pictorial representations as above admits a pictorial representation as above. This concludes the proof of the proposition.
\end{proof}

\noindent The category of squares $Q^\mathbb{C}_{W^*_{free_1}}$ of $Q^\mathbb{C}_{W^*_{free}}$ is thus a linear category whose objects are Hilbert bimodules between factors, whose morphisms are either usual intertwining operators between Hilbert bimodules or formal compositions as described in the proof of proposition \ref{HaagerupExtension}. The function associating to every von Neumann algebra $A$ its Haagerup standard form $L^2(A)$ admits an extension, as the horizontal identity functor of $Q_{W^*_{free}}$, to a linear functor

\[L^2:\mbox{\textbf{vN}}^{fact}\to Q^\mathbb{C}_{W^*_{free_1}}\]

\noindent The functor (on the left and right entries) associating to every compatible pair of Hilbert bimodules $H,K$ or intertwining operators $\varphi,\psi$ their Connes fusion $H\boxtimes K$ or $\varphi\boxtimes\psi$ respectively, admits an extension (to the bottom variable), as the horizontal composition functor of $Q_{W^*_{fact}}$, to a linear functor

\[\boxtimes_\bullet:Q^\mathbb{C}_{W^*_{free_1}}\times_{\mbox{\textbf{vN}}^{fact}}Q^\mathbb{C}_{W^*_{free_1}}\to Q^\mathbb{C}_{W^*_{free_1}}\]

\noindent Moreover, these two linear functors are compatible in the sense that they provide $Q^\mathbb{C}_{W^*_{free}}$ with the structure of a category internal to linear categories.

The techniques employed in the proof of proposition \ref{HaagerupExtension} do not apply to the bicategory of general, i.e. non-cessarily factor, von Neumann algebras nor even to semisimple von Neumann algebras. Through corollary \ref{saturationcorollary} we obtain weaker versions of proposition \ref{HaagerupExtension} for the case of von Neumann algebras with not-necessarily trivial center. Write \textbf{Mod} for the linear bicategory whose 2-cells are of the form:

\begin{center}

\begin{tikzpicture}
\matrix(m)[matrix of math nodes, row sep=4em, column sep=4em,text height=1.5ex, text depth=0.25ex]
{A&B\\};
\path[->,font=\scriptsize]
(m-1-1) edge [bend left=45] node [above]{$H$}(m-1-2)
        edge [bend right=45] node[below]{$K$}(m-1-2)
        edge [white]node[black][fill=white]{$\varphi$}(m-1-2);
\end{tikzpicture}
\end{center}

\noindent where $A,B$ are now general von Neumann algebras, $H,K$ are left-right Hilbert bimodules over $A,B$, and $\varphi$ is an intertwining operator from $H$ to $K$. The horizontal identity and horizontal composition on \textbf{Mod} are defined in analogy to those defining \textbf{Mod}$^{fact}$. Write \textbf{vN} for the the linear category of von Neumann algebras and general (not-necessarily finite) von Neumann algebra morphisms. The pair $(\mbox{\textbf{vN}},\mbox{\textbf{Mod}})$ is a decorated linear category. We write $W^*$ for this decorated bicategory. Write $\tilde{W}^*$ for the saturation of $W^*$. In that case the category of morphisms $Q^\mathbb{C}_{\tilde{W}^*_1}$ of $Q^\mathbb{C}_{\tilde{W}^*}$ is a linear category, whose objects are Hilbert bimodules over general von Neumann algebras, and whose morphisms contain the usual intertwining operators in \textbf{Mod}. The function associating to every von Neumann algebra $A$ its Haagerup standard form $L^2(A)$ extends, as the horizontal identity functor of $Q^\mathbb{C}_{\tilde{W}^*}$, to a linear functor

\[L^2:\mbox{\textbf{vN}}\to Q^\mathbb{C}_{\tilde{W}^*_1}\]

\noindent and the Connes fusion bifunctor $\boxtimes$ extends, as the horizontal composition functor of $Q^\mathbb{C}_{\tilde{W}^*}$, to a linear functor

\[\boxtimes_\bullet:Q^\mathbb{C}_{\tilde{W}^*_1}\times_{\mbox{\textbf{vN}}}Q^\mathbb{C}_{\tilde{W}^*_1}\to Q^\mathbb{C}_{\tilde{W}^*_1}\]

\noindent Moreover, these functors are compatible in the sense that they provide $Q^\mathbb{C}_{\tilde{W}^*}$ with the structure of a linear double category.

In \cite{Bartels1,Bartels2} a solution to problem \ref{prob} is presented for the decorated bicategory whose 2-cells are of the form:

\begin{center}

\begin{tikzpicture}
\matrix(m)[matrix of math nodes, row sep=4em, column sep=4em,text height=1.5ex, text depth=0.25ex]
{A&B\\};
\path[->,font=\scriptsize]
(m-1-1) edge [bend left=45] node [above]{$H$}(m-1-2)
        edge [bend right=45] node[below]{$K$}(m-1-2)
        edge [white]node[black][fill=white]{$\varphi$}(m-1-2);
\end{tikzpicture}
\end{center}

\noindent where $A,B$ are factors (more generally $A,B$ are semisimple) $H,K$ are left-right Hilbert bimodules over $A,B$ and where $\varphi$ is an intertwiner operator from $H$ to $K$, and whose decoration is the category of factors and finite index inclusions. The horizontal identity and the horizontal composition functors, i.e. the corresponding functorial extensions of the Haagerup standard form construction and the Connes fusion operation, are defined making strong use of the Kosaki theory of minimal conditional expectations of finite index subfactors \cite{Kosaki}. We write $BDH$ for this double category. We ask how the double category $Q^\mathbb{C}_{W^*_{fact}}$ described in the proof of proposition \ref{HaagerupExtension} and $BDH$ are related. We consider the sub-double category of $Q^\mathbb{C}_{W^*_{fact}}$ generated by globular squares and the squares of the form:

\begin{center}

\begin{tikzpicture}
\matrix(m)[matrix of math nodes, row sep=4em, column sep=4em,text height=1.5ex, text depth=0.25ex]
{A&A\\
B&B\\};
\path[->,font=\scriptsize]
(m-1-1) edge node [above]{$H$} (m-1-2)
        edge node [left]{$f$} (m-2-1)
(m-2-1) edge node[below]{$K$} (m-2-2)			
(m-1-2) edge node [right]{$f$} (m-2-2)
(m-1-1) edge [white]node[black][fill=white]{$\varphi$}(m-2-2) ;
\end{tikzpicture}
\end{center}

\noindent where $f$ is an inclusion of finite Jones index. We write $Q^\mathbb{C}_{W^*_{fin}}$ for this double category and we write $W^*_{fin}$ for $H^*Q^\mathbb{C}_{W^*_{fin}}$. We have the following equation:

\[H^*Q^\mathbb{c}_{W^*_{fin}}=W^*_{fin}=H^*BDH\]

\noindent and thus $Q^\mathbb{C}_{W^*_{fin}}$ and $BDH$ have the same category of objects, the same collection of horizontal morphisms, and the same collection of horizontal and globular squares. It is natural to expect some higher relation between the squares of $Q^\mathbb{C}_{W^*_{fin}}$ and the squares of $\gamma BDH$ to hold. It is easily seen that certain relations that hold in $\gamma BDH$ do not hold on $Q^\mathbb{C}_{W^*_{fin}}$, e.g. change of base algebra. This makes it obvious that the double categories $\gamma BDH$ and $Q^\mathbb{C}_{W^*_{fin}}$ are non-equivalent. There is an obvious strict tensor double functor $\pi$ from $Q^\mathbb{C}_{W^*_{fin}}$ to $\gamma BDH$ such that $\pi$ restricts to the identity on $H^*Q^\mathbb{C}_{W^*_{fin}}$. This double functor preserves squares of the form:

\begin{center}

\begin{tikzpicture}
\matrix(m)[matrix of math nodes, row sep=4em, column sep=4em,text height=1.5ex, text depth=0.25ex]
{A&A\\
B&B\\};
\path[->,font=\scriptsize]
(m-1-1) edge [red]node {} (m-1-2)
        edge node [left]{$f$} (m-2-1)
(m-2-1) edge [red]node{} (m-2-2)			
(m-1-2) edge node [right]{$f$} (m-2-2)
(m-1-1) edge [white]node[black][fill=white]{$L^2(f)$}(m-2-2) ;
\end{tikzpicture}
\end{center}

\noindent Since both $Q^\mathbb{C}_{W^*_{fin}}$ to $\gamma BDH$ are generated by both $H^*Q^\mathbb{C}_{W^*_{fin}}$ and the set of squares as above, the double functor $\pi$ is unique with respect to its value on $H^*Q^\mathbb{C}_{W^*_{fin}}$ and is surjective on squares. We study double functors of this form and the way they relate free globularily generated double categories to globularily generated internalizations in the second installment of the present series of papers.

The constructions presented above have an obvious drawback. All the categories of von Neumann algebras and all the bicategories of Hilbert bimodules we have considered are symmetric monoidal. We wish for the corresponding free globularily generated double categories and thus for the corresponding functorial extensions of the Haagerup standard form construction and the Connes fusion operation to be symmetric monoidal. It is not obvious how to extend the combined coherence data of both the decoration and the undelying bicategory of a symmetric monoidal bicateogry into coherence data for the obvious choice of monoidal structure on the free globularily generated double category construction. These questions will be explored elsewhere.

\bibliographystyle{plain}
\bibliography{freeglobular}

\end{document}